\documentclass[12pt]{article}
\usepackage{amsmath,amsthm,amsfonts,amssymb}
\usepackage{fullpage}
\usepackage{multirow}
\usepackage{array}
\usepackage{bbm}
\usepackage[noblocks]{authblk}
\usepackage{verbatim}
\usepackage{tikz}
\usepackage{float}
\usepackage{enumerate}
\usepackage{diagbox}
\usepackage{soul}
\usepackage{url}
\usepackage{mathtools}
\usepackage{ulem}
\usepackage{hyperref}
\usepackage{listings}
\usepackage{titling}
\usepackage{blindtext}

\def\modd#1 #2{#1\ \mbox{\rm (mod}\ #2\mbox{\rm )}}

\DeclareMathOperator{\LCM}{LCM}

\newtheorem{theorem}{Theorem}[section]

\newtheorem{lemma}[theorem]{Lemma}

\newtheorem{conjecture}[theorem]{Conjecture}

\theoremstyle{definition}

\newcommand{\Z}{\mathbb{Z}}

\title{Covering systems where the prime divisors of all moduli are only $2$, $3$, or $5$}

\author{Joshua Harrington, Jonah Klein, Joshua Lowrance, and Ognian Trifonov}

\begin{document}

\maketitle
\begin{abstract}
We try to find all quadruples of positive integers $(m,a,b,c)$ with $a \geq b \geq c$ such that there exists a distinct covering system with minimum modulus $m$ and least common multiple of the moduli $2^a 3^b 5^c$. We obtain complete description of all such quadruples when $m=2,3,4,5$, or $6$, except when $m=6$ and $b=c=1$. We also show that if the LCM of the moduli has  only $2$, $3$, or $5$ as prime divisors, then $m \leq 9$ and contruct a distinct covering system with $m=8$, $a=8$, $b=3$, and $c=2$. When a covering system exsists for a quadruple $(m,a,b,c)$ we provide an example. Nonexistence of covering systems is established via integer programming or by using a new estimate on the density of a set covered by a system of congruences.
\end{abstract}

\section{Introduction} \label{intro}

A  covering system ${\mathcal C}$  is a set of congruences $x \equiv \modd{r_i}  {n_i}$, $i = 1,\ldots ,k$,  such that     every integer satisfies at least one of the congruences. 

We can assume that ${\displaystyle 1 \leq n_1 \leq \cdots \leq n_k}$.  We will denote the least modulus $n_1$ of the covering by $m$, the largest modulus $n_k$ by $M$, and the least common multiple of all moduli by $L({\mathcal  C}) = L$. A distinct covering (system) is a covering where all moduli are distinct integers greater than $1$. 
The following well-known distinct covering system was exhibited by Davenport in 1952.
\begin{equation} \label{m2M12}
x \equiv \modd{1} {2}, \
x \equiv \modd{2} {4}, \ 
 x \equiv \modd{0} {3}, \ 
x \equiv  \modd{4} {6}, \
x \equiv \modd{8} {12}
\end{equation}
In this covering system  $m=2$, $M=12$, and $L=12$.

The use of covering systems in number theory was initiated by 
Erd\H{o}s in the early 1950s.  In 1971, Krukenberg \cite{Krukenberg}, a student of John Selfridge, wrote a dissertation where he did an extensive analysis of covering systems. Krukenberg investigated the question: If in a distinct covering system the minimum modulus $m$ is fixed, how small can its largest modulus $M$ be? The following table summarizes what is known about this question for $m=2,3,4,5,6$.

\begin{center}
$\begin{array}{|c|c|c|c|c|c|c|c|}
\hline
m & 2 & 3&4&5&6\\ \hline
M&12&36&60&108&168\\ \hline
\end{array}$
\end{center}

The covering system \eqref{m2M12} is an example of a covering system with $m=2$ and $M=12$. Krukenberg \cite{Krukenberg} constructed distinct covering systems with $m=3$ and $M=36$;  with $m=4$ and $M=60$; and with $m=5$ and $M=108$.
A distinct covering system with $m=6$ and $M=168$ was constructed by the fourth author and exhibited in \cite{Klein}. 

Krukenberg showed that if in a distinct covering system $m=2$, then $M \geq 12$; also, if $m=3$, then $M \geq 36$. He claimed that he had a proof that if $m=4$, then $M \geq 60$; and he conjectured that if $m=5$, then $M \geq 108$. 

About fifty years after Krukenberg's work, Dalton and the fourth author \cite{Dalton} showed that if in a distinct covering system $m=4$, then indeed $M \geq 60$. 

In July 2025, McNew and Setty \cite{McNew} introduced  the use of integer programming for covering systems (they used the Gurobi optimizer) to check whether one can construct a distinct covering with all moduli in a specific set. This allowed them to handle sets containing close to one hundred moduli. Previously, 
even for certain sets containing twenty moduli, it was not known whether one can construct a distinct covering system with all moduli in the given set. 

Extending the work of  McNew and Setty, the second author \cite{Klein} was able to show that if in a distinct covering system $m=5$, then $M \geq 108$, confirming Krukenberg's conjecture. While very little work has been done to construct distinct covering systems with fixed $m \geq 6$ and $M$ as small as possible, the second author \cite{Klein} has conjectured that if $m=6$, then $M\geq 168$.

Krukenberg also constructed distinct covering systems with $m=3,4,5,6$ where he tried to keep the LCM of the moduli $L$ as small as possible. 
The following table summarizes the smallest known values for $L$ when the minimum modulus $m$ of the covering is between $2$ and $6$. 

\begin{center}
$\begin{array}{|c|c|c|c|c|c|c|c|c|}
\hline
m & 2 & 3&4&5&6\\ \hline
L&12&120&360&1440&5040 \\ \hline
\end{array}$
\end{center}

Erd\H{o}s constructed a distinct covering with $m=3$ and $L=120$. The distinct coverings with $m=4$,  $L=360$; $m=5$, $L=1440$; and $m=6$, $L=5040$ were constructed by Krukenberg \cite{Krukenberg}.

Dalton and the fourth author \cite{Dalton} showed that the constants $120$ and $360$ in the above table cannot be replaced by smaller constants.

\begin{theorem}[Dalton and Trifonov]\label{Dalton}
If in a distinct covering system,\\
(i) $m=3$, then $L \geq 120$;
(ii) $m=4$, then $L \geq 360$.
\end{theorem}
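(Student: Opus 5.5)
The plan is to reduce the theorem to a finite check over the possible values of $L$. Every modulus of the covering divides $L$, so we may replace the covering by the set of all divisors $d$ of $L$ with $d \geq m$, each used at most once. A distinct covering with minimum modulus at least $m$ and LCM dividing $L$ exists if and only if the residues $\Z/L\Z$ can be covered by choosing, for each divisor $d \geq m$ of $L$, at most one residue class modulo $d$. Shrinking the set of allowed moduli only makes covering harder, so it suffices to rule out each maximal candidate $L < 120$ (respectively $L<360$) under divisibility. For (i), with $m=3$, these are the integers below $120$ not dividing another integer below $120$ that has at least two usable divisors $\geq 3$. For (ii), with $m=4$, they are the analogous integers below $360$.

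The first filter is the density bound. A covering must satisfy $\sum_{d \mid L,\, d\ge m} 1/d \ge 1$, that is, $\sigma(L)/L - \sum_{d\mid L,\, d<m} 1/d \geq 1$. For $m=3$ this requires $\sigma(L)/L \ge 5/2$ when $2\mid L$, and $\sigma(L)/L \geq 2$ otherwise. Only a few $L<120$ survive, essentially the highly abundant ones: $48$, $60$, $72$, $84$, $90$, $96$, $108$, and a handful of others. For $m=4$ we also subtract $1/3$, and the survivors below $360$ are multiples of $12$ such as $120$, $180$, $240$, $252$, $240$, $336$, and so on.

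The second step is a sharper density argument for the survivors. I would write $L = p^k L'$ with $p$ the largest prime factor, for instance $p=5$ or $p=7$ appearing to the first power. Congruences whose modulus is not divisible by $p$ cover a set that is periodic mod $L'$. The congruences divisible by $p$ can each cover, within a class modulo $L'$, at most a $1/p$ fraction of the $p$ lifts. A standard lemma (Simpson / Krukenberg type) says that for a first-power prime $p$, the classes mod $L'$ left uncovered by the $p$-free moduli must each be hit by at least $p$ of the $p$-divisible congruences, one per lift. Using distinctness, this yields a contradiction by counting: the number of distinct moduli $pd'$ available is too small compared with the density of uncovered classes left by the best $p$-free subsystem. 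The latter density is at least $1-\sum 1/d$, improved via inclusion–exclusion because two classes with coprime moduli always intersect. For instance, $L=60$ with $m=3$ fails because the moduli dividing $12$ cannot cover enough, leaving more than $5$ needed hits.

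The main obstacle is the last few cases, where the density bound is close to $1$: $L=72$ and $L=96$, which are pure $\{2,3\}$ numbers, and $L=180$ and $L=240$ for $m=4$. No large prime is available there to run the lift argument, so I would do an exhaustive structured search. Fix the residues of the smallest moduli, using translation and the automorphisms of $\Z/L\Z$ to normalize (for example, $x\equiv 0$ for the smallest modulus), then branch on the next modulus. At each node, prune when the uncovered set has density exceeding the total remaining $\sum 1/d$, or a corrected version accounting for overlaps. If a hand argument becomes unwieldy, I would fall back on the integer-programming formulation of McNew and Setty: binary variables $x_{d,r}$, the constraint $\sum_r x_{d,r}\le 1$ for each $d$, and a covering constraint for each residue mod $L$. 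Infeasibility for each surviving $L$ then completes both parts.
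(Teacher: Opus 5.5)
The paper gives no proof of this statement; it quotes it from Dalton and Trifonov. So your sketch has to stand on its own, and its decisive step does not.

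Your reduction to maximal $L$ (that is, $60\le L<120$, resp.\ $180\le L<360$) is correct, and so is the density filter. But excluding the survivors rests on ``this yields a contradiction by counting,'' and that already fails in your own example. For $L=60$, $m=3$, the moduli $3,4,6,12$ leave a set $U$ of at least $3$ classes mod $12$ uncovered. A congruence with modulus $5d'$ ($d'\mid 12$) meets at most $\min(|U|,12/d')$ of these classes. So for $|U|=3$ the moduli $5,10,15,20,30,60$ can supply $3+3+3+3+2+1=15=5\cdot 3$ hits, which is a tie, not a contradiction.

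What closes this case is the stronger, splitting form of your lemma. Sort the $5$-divisible congruences by their residue mod $5$. This gives five subsystems, each of which must cover $U$, with pairwise distinct moduli $d'\in\{1,2,3,4,6,12\}$. No single class mod $6$ or mod $12$ contains $U$, so the groups must be $\{1\},\{2\},\{3\},\{4\},\{6,12\}$. That forces $U$ into one class mod $3$ and one class mod $4$, hence $|U|\le 1$, a contradiction.

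In harder cases even the hit count is hopeless. For $L=240$, $m=4$, $p=5$, the $5$-free moduli only force $|U|\ge 12$ classes mod $48$, while the count only excludes $|U|\ge 18$. Such cases need a real case analysis or the integer program, and you carry out neither. As written, the proposal proves nothing beyond the density filter. Running the McNew--Setty program on every survivor would be a legitimate computer-assisted proof, but then that is the proof, not a fallback.

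The bookkeeping also needs fixing. The survivors are exactly $60,72,84,90,96,108$ for $m=3$, and $180,240,252,280,288,300,336$ for $m=4$.
\begin{itemize}
\item $280=2^3\cdot5\cdot7$ survives. Since $3\nmid 280$, only $1+\frac12$ is lost, and $\sigma(280)/280=\frac{18}{7}>\frac52$. This contradicts your claim that all survivors are multiples of $12$.
\item $300=2^2\cdot3\cdot5^2$ has its largest prime squared, so there the lift lemma must be used with $p=3$.
\item The pure $\{2,3\}$ cases $72,96,108,288$ need no search: Theorem~\ref{Krukenberg2}\,(i),(ii),(iii),(v) excludes them directly.
\item $180$ and $240$ do have $5$ to the first power. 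They are hard because their density slack ($0.2$ and about $0.27$) is the largest among the $m=4$ survivors, not the smallest.
\item Subtracting only coprime-pair overlaps is not an upper bound for a union in general; moduli $2,3,5$ already violate it. It is valid in your setting only because every $L'$ that arises has just two prime factors, cf.\ Theorem~\ref{1.9}.
\end{itemize}
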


Using Integer Programming, the second author \cite{Klein} showed that the constants $1440$ and $5040$ in the above table cannot be replaced by smaller constants.

\begin{theorem}[Klein]\label{Klein}
If in a distinct covering system,\\
(i) $m=5$, then $L \geq 1440$;
(ii) $m=6$, then $L \geq 5040$.
\end{theorem}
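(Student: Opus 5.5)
The statement is a finite nonexistence claim. For each $m\in\{5,6\}$ there are only finitely many candidate values $L<1440$ (resp.\ $L<5040$), and the moduli must be distinct divisors of $L$ that are at least $m$. I would therefore reduce the theorem to a finite list of integer-programming feasibility problems and settle each one by computer, following McNew and Setty \cite{McNew}.

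\textbf{Reduction to maximal candidates.} If a distinct covering has all moduli dividing $L$, then it also has all moduli dividing any multiple of $L$. So it suffices to treat the inclusion-maximal candidates: for each $L<1440$ I take the set $D_L=\{d : d\mid L,\ d\ge m\}$, and I keep only those $L$ whose $D_L$ is not contained in some $D_{L'}$ with $L\mid L'<1440$. The same applies for $m=6$ with the bound $5040$.

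\textbf{Pruning by density.} A necessary condition for a covering is $\sum_{d\in D_L}1/d\ge 1$. Moreover, a prime $p$ dividing $L$ only to the first power, with $p$ large compared to the number of moduli it appears in, can be removed by the standard argument. The argument goes as follows. The moduli divisible by $p$ split into $p$ residue classes modulo $p$. If these moduli cannot cover all $p$ lifts of some class, one can choose a residue modulo $p$ avoiding them, and then the remaining moduli, after dividing out $p$, must form a covering with LCM $L/p$. This reduces the problem to smaller cases that have already been handled or are excluded by the density condition. I would apply this to the primes $7,11,13,\dots$, which leaves a short list of $L$ built essentially from $2,3,5$ and perhaps $7$.

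\textbf{Integer program.} For each surviving $L$, I introduce binary variables $x_{d,r}$ for $d\in D_L$ and $0\le r<d$, with $\sum_r x_{d,r}\le 1$ for each $d$. For every residue $n$ modulo $L$ I impose the covering constraint $\sum_{d\in D_L}x_{d,\,n\bmod d}\ge 1$. Symmetry can be broken by fixing the residue of one modulus, using translation invariance. Infeasibility of every such program, certified by a solver such as Gurobi, proves the theorem.

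\textbf{Main obstacle.} The hardest case is $m=6$ with $L$ close to $5040$, for example $L=2^4\cdot 3^2\cdot 5\cdot 7$-type values just below the bound. There $L$ yields thousands of constraints and the model is too large to solve directly. My first approach is to fix the residues modulo the small moduli, namely the divisors of $\gcd$-structured pieces such as $2^a3^b$, and enumerate them up to symmetry. Each choice leaves a residual set of uncovered classes. I then either apply the density bound to that uncovered set, or solve a much smaller integer program on it.
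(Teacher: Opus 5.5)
Your plan follows the same route as the paper. The paper gives no argument of its own for this result; it cites Klein, who proved both bounds by integer programming in the McNew--Setty framework (finitely many candidate $L$, one covering constraint per residue modulo $L$, and a few preset residues to break translation symmetry), which is what you propose.

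One correction, which does not affect validity: the ``fewer than $p$ congruences divisible by $p$'' reduction plus the density test does not eliminate $p=11$ for $m=5$, since $L=1320=2^3\cdot 3\cdot 5\cdot 11$ survives both. Nor does it eliminate $p=11,13$ for $m=6$, since for example $L=4680=2^3\cdot 3^2\cdot 5\cdot 13$ survives. So the list of integer programs to solve is longer than ``built from $2,3,5$ and perhaps $7$.''
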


Krukenberg also provided a complete description of all distinct covering systems with the least common multiple of the moduli of the form $L = 2^a 3^b$ with $a$ and $b$ positive integers.

\begin{theorem}[Krukenberg]\label{Krukenberg2}
 Let ${\mathcal C}$ be a distinct covering system with the least common multiple of the moduli of the form $L = 2^a 3^b$ with $a$ and $b$ positive integers and least modulus $m$. Then

(i) $m \leq 4$;

(ii) If $m = 3$,  then $a \geq 3$, and $b \geq 2$;

(iii) If $m=3$  and $a=3$, then $b \geq 3$;

(iv) There exist distinct covering systems with $m = 3$ for each $L\in \{2^4 3^2, 2^3 3^3\}$;

(v) If $m = 4$, then $a \geq 5$ and $b \geq 3$;

(vi) There exist distinct covering systems with $m = 4$ for each $L\in\{2^7 3^3, 2^6 3^4, 2^5 3^5\}$;

(vii) There is no distinct covering system  with $m = 4$ and $L \in \{2^6 3^3, 2^5 3^4\}$.
\end{theorem}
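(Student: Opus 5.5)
We would prove Theorem~\ref{Krukenberg2} by combining a density (counting) argument for the nonexistence parts with explicit constructions for the existence parts. The basic tool is the density inequality. If a distinct covering has all moduli dividing $L=2^a3^b$ with least modulus $m$, then
\[
\sum_{d\mid L,\ d\ge m} \frac{1}{d} \;\ge\; 1 .
\]
We would then sharpen it with a Krukenberg-style refinement that accounts for overlaps.

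\textbf{Part (i).} The sum of $1/d$ over all divisors of $2^a3^b$ is less than $\sigma(2^a3^b)/2^a3^b<2\cdot\tfrac32=3$. Removing the divisors $1,2,3,4$ subtracts $1+\tfrac12+\tfrac13+\tfrac14$, which leaves less than $1$ for $m\ge5$. So already the crude density bound rules out $m\ge 5$.

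\textbf{Parts (ii), (iii), (v), (vii).} The crude bound is too weak here, since for $m=3$ or $m=4$ the sum exceeds $1$. For these parts we would use a counting argument modulo $L$ that exploits overlaps. Fix a residue class $r \pmod 3$ (or $\pmod 9$) and restrict attention to the integers in it. Congruences whose modulus is coprime to $3$ (the powers of $2$) meet this class with relative density $1/d$. Congruences $x\equiv s \pmod{2^i3^j}$ with $j\ge1$ meet only one class mod $3$. Choosing $r$ to be the class least covered by moduli divisible by $3$ gives an averaging inequality
\[
\sum_{i} 2^{-i}\;+\;\tfrac13\!\!\sum_{j\ge1} \frac{3}{2^i3^j}\cdot(\text{share}) \;\ge\; 1 ,
\]
and symmetrically for $2$-adic classes. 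Concretely, for (ii) with $b=1$ the covering restricted to a class mod $3$ becomes a distinct-like covering by powers of $2$, possibly with repeats. Since the powers of $2$ that are at least $3$ sum to at most $\tfrac12$, a pigeonhole argument over the three classes shows the total contribution is insufficient. Similarly, $a\le2$ forces moduli in $\{3,4,6,9,12,\dots\}$ with too little density once we restrict to a class mod $4$. The bounds $a\ge5$, $b\ge3$ for $m=4$ and the case $a=3$, $b\ge3$ for $m=3$ go the same way, but the margins are thin. There we would need a finer case split on which residue classes the small moduli ($3,4,6,8,9$) occupy, tracking exactly the uncovered set modulo $2^a3^b$. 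For (vii) we would do an exhaustive search: a finite integer program over the divisors of $2^63^3$ and $2^53^4$, in the spirit of McNew and Setty, certifying infeasibility.

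\textbf{Parts (iv) and (vi).} For existence we would give explicit coverings. Starting from Davenport's covering~\eqref{m2M12}, we would replace the modulus-$2$ congruence by a covering of the class $1\pmod 2$ using moduli $2\cdot(\text{divisors})$ at least $3$, and iterate a greedy construction checked by computer modulo $L$.

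The main obstacle is the sharp nonexistence cases (iii) and (vii), where density alone fails. For these we would first try the integer programming reduction, since a hand argument would require delicate case analysis of the residues chosen for the moduli $3,4,6,9$.
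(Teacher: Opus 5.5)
The paper gives no proof of this theorem (it is quoted from Krukenberg's dissertation), so your proposal has to stand on its own. Parts (i), (ii), (iii) and (vii) are essentially fine:

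\begin{itemize}
\item (i) is the crude bound $3-\tfrac{25}{12}=\tfrac{11}{12}<1$.
\item Your averaging for (ii) can be completed. For $b=1$, naive averaging over the three classes mod $3$ only gives $\tfrac12+\tfrac23>1$. You must use that the modulus-$3$ congruence fills one class, so the other two share weight $<1$ from $6,12,\dots$.
\item (iii) and (vii) are finite problems, where an integer program is a valid certificate. (iii) also follows at once from Theorem~\ref{Dalton}(i), since $72<120$.
\end{itemize}

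\textbf{The gap in (v).} Part (v) asserts nonexistence for infinitely many $L$: every $b$ when $a\le 4$, and every $a$ when $b\le 2$. Neither of your tools reaches that. Integer programming, or ``tracking exactly the uncovered set modulo $2^a3^b$'', only certifies finitely many $L$. The one-level averaging does not close either. For $b=2$, $m=4$, each class mod $3$ needs relative density $>\tfrac12$ from moduli divisible by $3$. But $3\cdot2^i$ ($i\ge1$) and $9\cdot 2^i$ ($i\ge0$) supply total relative weight approaching $1+\tfrac23=\tfrac53>\tfrac32$, so averaging gives no contradiction. The finer case split you allude to is never given, and it cannot be an exhaustive search when an exponent is unbounded.

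\textbf{The missing idea.} What is missing is an overlap estimate uniform in the free exponent. Theorem~\ref{1.9} with $c=0$ (its hypothesis allows nonnegative exponents) is exactly this, and for two primes it is elementary:
\begin{itemize}
\item Make the pure $2$-power congruences pairwise disjoint, and likewise the pure $3$-power ones. Replacing a congruence contained in an earlier one never lowers coverage, and a free class exists because distinct moduli $2^i$, $i<k$, have total density $<1$.
\item The two unions then have densities $\alpha$ and $\beta$ and are independent by the Chinese Remainder Theorem.
\item Hence the covered density is at most $\alpha+\beta-\alpha\beta+\sum_{\mathrm{mixed}}1/d$.
\end{itemize}
For $m=4$ this bound equals $1-\tfrac34\cdot3^{-b}$ when $a=4$, and $1-\tfrac43\cdot2^{-a}$ when $b=2$. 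The cases $a\le 3$ and $b=1$ already fail the crude sum, which is $<\tfrac{47}{48}$ and $<\tfrac56$ respectively. The same bound also gives (ii) directly: $1-2^{-a}$ for $b=1$, and $1-\tfrac34\cdot3^{-b}$ for $a=2$.

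\textbf{Existence parts.} Separately, (iv) and (vi) are existence claims, and you have not exhibited any covering. A greedy modification of Davenport's system is a search heuristic, not a proof. For $m=4$ you would also need to remove the modulus-$3$ congruence, which that plan does not address. You must either write the coverings down and check them (for example, from a feasible solution of the same integer program) or cite Krukenberg's constructions.
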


The main goal of this paper is to find all integers $m \geq 2$ and triples of integers $a \geq b \geq c \geq 1$ for which there exist  distinct covering systems with a least modulus $m$ and the least common multiple of its moduli $L=2^a 3^b 5^c$.

The main reason for the restriction $a \geq b \geq c$ is that we believe the following conjecture holds.

\begin{conjecture}
Let $m \geq 2$, and let $a$, $b$, and $c$ be positive integers. Let $(a_1,b_1,c_1)$ be a permutation of $(a,b,c)$ such that $a_1 \geq b_1 \geq c_1$. If there exists a distinct covering system with a least modulus $m$ and the LCM of its moduli $L=2^a 3^b 5^c$, then there exists a distinct covering system with a least modulus $m$ and the LCM of its moduli $L=2^{a_1} 3^{b_1} 5^{c_1}$.
\end{conjecture}

There has been partial progress toward proving the above conjecture. For example, see
\cite{Dalton} Lemma 13.

Next, we describe our results.

\begin{theorem} \label{1.5}
Let ${\mathcal C}$ be a distinct covering system with the least common multiple of its moduli of the form $L = 2^a 3^b 5^c$ with $a$, $b$, and $c$ positive integers such that $a \geq b \geq c$, and the least modulus $m$. Then

(i) If $m =2$, then $a \geq 2$.

(ii) There exists a distinct covering system with $m=2$, $a=2$, $b=1$, $c=1$.

(iii) If $m=3$, and $a=2$, then $b \geq 2$.

(iv) There exists a distinct covering system with $m=3$ and $a=2$, $b=2$, $c=1$ not using moduli $5$ and $180$.

(v) There exists a distinct covering system with $m=3$ and $a=3$, $b=1$, $c=1$.

\end{theorem}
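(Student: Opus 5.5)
Parts (iii), (ii) and (i) can be settled from earlier results or by a short counting argument. Parts (iv) and (v) are constructions that I would verify by direct computation.

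\emph{Part (iii).} Assume $m=3$ and $a=2$. If $b=1$, then $c=1$, so $L=60$. This contradicts Theorem~\ref{Dalton}(i), which requires $L\ge 120$ when $m=3$. Hence $b\ge 2$.

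\emph{Part (ii).} Start from Davenport's covering \eqref{m2M12}, whose moduli are $2,3,4,6,12$. Adjoin the congruence $x\equiv \modd{0} {5}$. The system is still a covering, the moduli are still distinct, the least modulus is still $2$, and the LCM becomes $60=2^2\cdot 3\cdot 5$.

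\emph{Part (i).} Suppose $m=2$ and $a=1$. Then $b=c=1$ and every modulus lies in $\{2,3,5,6,10,15,30\}$, each used at most once. After translating we may assume the modulus $2$ covers the odd integers, so the even residues modulo $30$ must be covered by the other congruences.
\begin{itemize}
\item The congruences with moduli $3$ and $6$ each meet the even integers in at most one residue class modulo $3$.
\item So some class $r \bmod 3$ among the even integers is untouched by them.
\item The even integers with $x\equiv \modd{r} {3}$ form $5$ residue classes modulo $30$, one in each class modulo $5$.
\item Each of the moduli $5,10,15,30$ contains at most one of these five classes, since each congruence fixes $x$ modulo $5$.
\end{itemize}
So at most $4$ of the $5$ classes are covered, a contradiction. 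Hence $a\ge 2$.

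\emph{Part (v).} Exhibit Erd\H{o}s's covering with $L=120=2^3\cdot3\cdot5$ and $m=3$:
\[x\equiv 0\ (3),\ 0\ (4),\ 0\ (5),\ 1\ (6),\ 1\ (8),\ 2\ (10),\ 11\ (12),\ 1\ (15),\ 14\ (20),\ 5\ (24),\ 8\ (30),\ 6\ (40),\ 58\ (60),\ 26\ (120).\]
This is the version I recall, and it must still be checked. The check is to verify that every residue modulo $120$ lies in one of these classes; the moduli are visibly distinct.

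\emph{Part (iv).} I would search for a covering using distinct moduli from the divisors of $180$ other than $1,2,5,180$. Following McNew--Setty and Klein, I would set this up as an integer program: one binary variable for each (modulus, residue) pair, at most one residue chosen per modulus, and each residue modulo $180$ required to be covered. Once a solution is found, I would record it and verify it by checking all $180$ residues. The LCM will be $180$ provided moduli divisible by $4$, by $9$ and by $5$ all appear, which any solution will need in order to have enough density.

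\emph{Main obstacle.} Parts (i)--(iii) are immediate. The real work is producing and checking an explicit covering for (iv), since the admissible moduli have reciprocal sum only slightly above $1$. I expect the integer-programming search to find one quickly. If it does not, I would try hand-building it: take $0 \bmod 3$ and $0 \bmod 4$, then cover the remaining classes using the moduli divisible by $9$ and $5$.
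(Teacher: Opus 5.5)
Your treatment of (i), (ii), (iii) and (v) is correct. For (i) you give a self-contained counting argument, where the paper only cites Krukenberg's theorem on squarefree coverings. In (ii), adjoining $x\equiv 0\pmod 5$ makes explicit what the paper leaves implicit. Part (iii) is the paper's argument. The system you recall for (v) does cover the integers, whereas the paper again just cites the literature:
within $1\pmod 3$, the moduli $4,6$ leave $x\equiv 10\pmod{12}$, which $5,10,15,20,60$ finish;
within $2\pmod 3$, the moduli $4,12$ leave $x\equiv 5\pmod{12}$, finished by $8,24$, and $x\equiv 2\pmod{12}$, finished by $5,10,20,30,40,120$.

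The gap is (iv). It is an existence statement, and you never exhibit a covering. Describing an integer program you expect to succeed is a plan, not a proof.

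Your hand-built fallback would also fail as literally stated. After $0\pmod 3$ and $0\pmod 4$ the uncovered density is $1/2=90/180$. The admissible moduli divisible by $9$ or by $5$ have reciprocal sum only $89/180$, so $6$ or $12$ must also be used.

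Similarly, your claim that density forces the LCM to be $180$ is only right for the factor $4$: the other admissible moduli give $81/90<1$. Omitting all multiples of $9$, or all multiples of $5$, still leaves reciprocal sums $66/60$ and $37/36$. There you need Theorem~\ref{Dalton}(i) and Theorem~\ref{Krukenberg2}(ii), or simply the LCM of the system you write down.

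The missing ingredient is an explicit system, for instance the paper's. In ordinary notation it reads
\[
\begin{array}{l}
x\equiv 2\ (3),\ 3\ (4),\ 4\ (6),\ 6\ (9),\ 4\ (10),\ 1\ (12),\ 3\ (15),\\
x\equiv 3\ (18),\ 10\ (20),\ 12\ (30),\ 9\ (36),\ 36\ (45),\ 0\ (60),\ 66\ (90),
\end{array}
\]
and it uses all fourteen admissible moduli. Verifying it is short:
$2\pmod 3$ is covered outright;
$1\pmod 3$ is covered by the moduli $4,6,12$;
inside $0\pmod 3$, the odd residues are covered by $4,9,18,36$;
the even residues in $0\pmod 3$, after $6\pmod 9$ is removed by the modulus $9$, are split by their residue modulo $5$. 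They are finished by $10$, $15$, $30$ (residues $4,3,2$), the pair $45,90$ (residue $1$), and the pair $20,60$ (residue $0$).
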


The next theorem deals with the case $m=4$

\begin{theorem} \label{1.6}
Let ${\mathcal C}$ be a distinct covering system with the least common multiple of its moduli of the form $L = 2^a 3^b 5^c$ with $a$, $b$, and $c$ positive integers such that $a \geq b \geq c$, and least modulus $m=4$. Then

(i) $a \geq 3$.

(ii) If $a=3$, then $b \geq 2$.

(iii) There exists a distinct covering system with $a=3$, $b=2$, $c=1$.

(iv) If $b=1$, then $a \geq 5$.

(v) There exists a distinct covering system with $a=5$, $b=1$, $c=1$ not using modulus $240$.

\end{theorem}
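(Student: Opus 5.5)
Parts (ii) and (iv) follow directly from Theorem~\ref{Dalton}(ii). Part (i) leaves one small finite case, which I would settle by integer programming. Parts (iii) and (v) are explicit constructions.

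The reduction for (ii) and (iv) comes from the constraint $a \geq b \geq c \geq 1$. For (ii), if $a=3$ and $b=1$, then $c=1$ and $L=2^3\cdot 3\cdot 5=120<360$, which contradicts Theorem~\ref{Dalton}(ii). For (iv), if $b=1$, then $c=1$ and $L=2^a\cdot 15$. The cases $a\le 4$ give $L\in\{30,60,120,240\}$, all less than $360$, so Theorem~\ref{Dalton}(ii) again gives a contradiction. Hence $a\ge 5$.

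For (i), suppose $a\le 2$. Since every modulus is at least $4$ and divides $L$, we need $4\mid L$, so $a=2$. The possible triples are $(2,1,1)$, $(2,2,1)$ and $(2,2,2)$, giving $L=60$, $180$ and $900$. The first two are excluded by Theorem~\ref{Dalton}(ii), so only $L=900$ remains.

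The naive density bound does not dispose of $L=900$. The reciprocals of the divisors $d\ge 4$ of $900$ sum to
\[
\frac{\sigma(900)}{900}-1-\tfrac12-\tfrac13\approx 1.30>1 .
\]
I would try two approaches, in this order.
\begin{itemize}
\item First, a structural argument. Every modulus has $2$-part dividing $4$. Split the integers into the four residue classes mod $4$ and look at which congruences meet each class. A congruence whose modulus is odd (these moduli lie in $\{5,9,15,25,45,75,225\}$) meets all four classes. Every other congruence meets at most two classes, and those of modulus divisible by $4$ meet exactly one. So some class mod $4$ must be covered using the odd moduli plus only a share of the even ones. That gives a covering of $\mathbb Z$ by congruences with moduli dividing $225$ (after dividing out the $2$-parts) and weighted reciprocal sum below $1$. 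Counting this share carefully, or applying a sharper density estimate, should give a contradiction.
\item If the counting is too tight, I would instead encode the problem as a $0$--$1$ integer program in the style of McNew and Setty. There is one variable for each pair (divisor $d\ge4$ of $900$, residue mod $d$), each $d$ is used at most once, and each residue mod $900$ must be covered. A solver then certifies infeasibility.
\end{itemize}
I expect $L=900$ to be the main obstacle, since it is the only step not reducible to earlier results.

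For (iii) and (v), I would search for explicit coverings, again with an integer program. For (iii) the allowed moduli are the divisors $d\ge4$ of $360$; this is Krukenberg's $L=360$ case, so his covering, or any solver output, works provided all three primes actually occur, so that the LCM is exactly $2^3 3^2 5$. For (v) the allowed moduli are the divisors $d\ge 4$ of $480$ other than $240$. I would impose the constraint that $32$ or some multiple of it is used, so that the LCM is $2^5\cdot 3\cdot 5$. In each case the covering is verified directly by checking every residue mod $L$, and then listed as an example.
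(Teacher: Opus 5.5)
Your handling of (ii), (iii) and (iv) matches the paper. Every case excluded there has $L\in\{30,60,120,240\}$, below the bound $360$ of Theorem~\ref{Dalton}(ii), and (iii) is Krukenberg's $L=360$ covering. For (i) the paper also reduces to $L=900$ and settles it with exactly the $0$--$1$ program of your second bullet, which runs in under a second. Since that program allows every divisor $d\ge 4$ of $900$, it also disposes of $L=60$ and $L=180$. One small slip: $4\mid L$ holds because $m=4$ means $4$ is itself a modulus, not because all moduli are at least $4$.

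There are two caveats.

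First, your first bullet for (i) cannot work as stated. Credit a congruence modulo $2^i d$ ($d$ odd, $i\le 2$) with $1/d$ in each of the $2^{2-i}$ classes mod $4$ that it meets. The total credit is then exactly $4\sum 1/m_i=4\cdot\frac{1171}{900}$. Pigeonhole therefore only gives a class of weight at most $\frac{1171}{900}\approx 1.30$, which is the global bound again, not something below $1$. Even the paper's sharper estimate, Theorem~\ref{1.9}, gives $\frac{1171-217+6}{900}=\frac{16}{15}>1$ for this set of moduli. So the integer program really is needed here, as in the paper.

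Second, for (v) you describe a search, not a proof. An existence statement needs the witness written down and checked. The paper gives one by hand:
\begin{itemize}
\item $(11),(101),(1001),(10001)$ modulo $4,8,16,32$ leave only $(0)$ and $(10000)$;
\item congruences modulo $6,12,24,48,96$ come next;
\item then congruences modulo $5,10,20,40,80,160$;
\item finally, congruences modulo $15,30,60,120,480$ finish the covering without using modulus $240$.
\end{itemize}
Your solver would find such a system. Your extra constraint that $32$ divide some modulus is automatic by (iv), and $3$ and $5$ are forced into the LCM by a one-line density count. But until a system is actually listed and verified, (v) is not proved.
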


The next theorem deals with the case $m=5$.

\begin{theorem} \label{1.7}
Let ${\mathcal C}$ be a distinct covering system with the least common multiple of its moduli of the form $L = 2^a 3^b 5^c$ with $a$, $b$, and $c$ positive integers such that $a \geq b \geq c$, and least modulus $m=5$. Then

(i) $b \geq 2$.

(ii) If $a=3$, then $b \geq 3$ and $c \geq 2$.

(iii) There exists a distinct covering system with $a=3$, $b=3$, and $c=2$ not using moduli which are multiples of $675$.

(iv)  If $a=4$, then $c \geq 2$.

(v)  There exists a distinct covering system with $a=4$, $b=2$, and $c=2$ not using moduli $25,100,300$, and $900$.

(vi)  There exists a distinct covering system with $a=4$, $b=3$, and $c=1$ not using modulus $180$.

(vii)  There exists a distinct covering system with $a=5$, $b=2$, and $c=1$.

\end{theorem}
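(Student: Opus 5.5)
The theorem has two kinds of parts. Parts (i), (ii) and (iv) are nonexistence claims. Parts (iii) and (v)--(vii) are existence claims. I would handle them separately.

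\textbf{Existence parts (iii), (v), (vi), (vii).} Here the proof is an explicit construction that is then verified. For each target $L$ I list the admissible moduli: the divisors of $L$ that are at least $5$, omitting the excluded ones. For (iii) that means dropping the multiples of $675$; for (v), dropping $25,100,300,900$; for (vi), dropping $180$. I then look for residues $r_d$ such that the classes $r_d \pmod d$ cover $\Z/L\Z$. I would find them with the integer-programming formulation of McNew and Setty. There is one binary variable for each pair $(d,r)$ with $\sum_r x_{d,r}\le 1$, and for each $t \in \Z/L\Z$ a covering constraint $\sum_{d} x_{d,\, t \bmod d} \ge 1$.

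For the smaller cases ($L=2^5 3^2 5=1440$ and $2^4 3^3 5=2160$) I would first try a hand-built layered construction. One covers by the $5$-part first: the moduli $5,10,15,20,30,\dots$ handle most residues mod $5$. The remaining classes are then lifted and covered with the $2$- and $3$-power moduli, in the style of Krukenberg's $2^a3^b$ coverings from Theorem~\ref{Krukenberg2}.

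Verification of each example is mechanical. One can check every residue mod $L$ by computer. Alternatively, one can use the density bookkeeping: after fixing the congruences with moduli dividing a divisor $L'$, count the uncovered classes mod $L'$.

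\textbf{Nonexistence parts (i), (ii), (iv).}
\begin{itemize}
\item For (i), suppose $b=1$. Then $c=1$ and $L=2^a\cdot 15$. The first step is to bound $a$. The available moduli are $2^i$, $3\cdot 2^i$, $5\cdot 2^i$ and $15\cdot 2^i$ with at least $5$ of them. The reciprocal sum $\sum 1/d$ minus the forced overlaps must reach $1$.
\item A useful reduction is the standard projection argument. Fix a residue class modulo $15$ that avoids the congruences whose moduli involve $3$ or $5$ in a chosen way. On that class the remaining congruences restrict to a covering by powers of $2$ with distinct moduli. Such a covering is impossible unless it uses the modulus $1$.
\item More quantitatively, I would use the paper's density estimate. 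Choose residues mod $3$ and mod $5$ that minimize coverage. Then show that the proportion covered in that fiber is strictly less than $1$, for every $a$, once overlaps are counted.
\item If a clean argument fails, I would settle it computationally. I would show via the density estimate that large $a$ reduces to small $a$ (the moduli $2^i$ with large $i$ contribute negligibly), then run the integer program for the finitely many remaining $a$ and confirm infeasibility.
\item Parts (ii) and (iv) are finite cases. For (ii), the claim that $a=3$ forces $b\ge3$ and $c\ge2$ reduces to infeasibility for $L=2^3 3^2 5^2$ and for $L=2^3 3^3 5$. For (iv), $a=4$ and $c=1$ means $b\le 4$, and it suffices to rule out the largest case $L=2^4 3^4 5$, since smaller $b$ gives subsets of its moduli. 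Each of these is checked by the integer program reporting infeasibility.
\end{itemize}

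The main obstacle is part (i), because it involves infinitely many $a$. The key step is a density or fiber argument bounding $a$. I would try the fiber restricted to a class mod $15$ first, then fall back on the paper's density estimate.
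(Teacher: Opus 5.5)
\textbf{Part (iv): your plan sets out to prove a false claim.} You read (iv) as ``no covering exists with $a=4$, $c=1$.'' You then reduce it to infeasibility of the integer program on the divisors $\ge 5$ of $2^4 3^4 5=6480$. But part (vi), which you also plan to prove, gives a distinct covering with $m=5$ and $L=2^4 3^3 5=2160$, and $2160\mid 6480$. So that program is feasible. Appending one congruence modulo $6480$ to the covering of (vi) even gives a covering with $(a,b,c)=(4,4,1)$.

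Read literally, (iv) contradicts (vi), and you should have caught this when planning both. What the paper actually proves in (iv) is the case $b=2$: there is no covering with $m=5$ and $L=2^4 3^2 5=720$. This needs no new computation, since $720<1440$ and Theorem~\ref{Klein}(i) applies. The same theorem also handles $L=1080$ in (ii), so only $L=1800$ needs integer programming there. That is exactly what the paper does; your reduction of (ii) to $1080$ and $1800$ is otherwise correct.

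\textbf{Part (i): the projection step is wrong as stated, and the fiber argument is missing its key step.} On a fiber $x\equiv t \pmod{15}$, the moduli $8,24,40,120$ can all restrict to congruences modulo $8$. So the induced power-of-$2$ system need not have distinct moduli. Moreover, the raw reciprocal sum $\frac{67}{60}-\frac{8}{5\cdot 2^a}$ exceeds $1$ for $a\ge 4$, so naive counting fails.

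The paper instead applies its inclusion--exclusion bound, Theorem~\ref{1.9}, in Section~\ref{nonexist}. This gives covered density at most $\frac{59}{60}-\frac{19}{15\cdot 2^a}<1$ for every $a$.

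Your fiber idea can be made to work, but you need to add the step you left out:
\begin{itemize}
\item The three fibers with $t\equiv r_5\pmod 5$, and the fiber $t\equiv r_{15}\pmod{15}$, are entirely covered.
\item Besides the moduli $5$ and $15$, each of these fibers absorbs relative density $\tfrac14-2^{-a}$ from the moduli $2^i$.
\item Together they absorb at least $1-2^{-a}$ from the moduli $3\cdot 2^i$, because the three fibers meet every class mod $3$.
\item Averaging what remains of the total $15\bigl(\frac{67}{60}-\frac{8}{5\cdot 2^a}\bigr)$ over the other $11$ or $12$ fibers gives less than $\frac{43}{44}$. Hence some fiber is not covered.
\end{itemize}
This is an elementary alternative to Theorem~\ref{1.9} for this part. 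However, ``choose residues that minimize coverage'' does not by itself supply this step.

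\textbf{Existence parts.} Your search plan would succeed, since the coverings exist. Still, the proof is the explicit list of congruences, which the paper writes out layer by layer (and cites Krukenberg for (vii)); your proposal does not supply them.
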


The next theorem deals with the case $m=6$.

\begin{theorem} \label{1.8}
Let ${\mathcal C}$ be a distinct covering system with the least common multiple of its moduli of the form $L = 2^a 3^b 5^c$ with $a$, $b$, and $c$ positive integers such that $a \geq b \geq c$, and least modulus $m=6$. Then

(i) $a \geq 4$ and $b \geq 2$.

(ii) If $a=4$, then $b \geq 3$, and $c \geq 2$.

(iii) There exists a distinct covering system with $a=4$, $b=3$, and $c=2$  not using moduli $180$ and $900$.

(iv) If $a=5$ and $b \leq 3$, then $c \geq 2$.

(v) There exists a distinct covering system with $a=5$, $b=2$, and $c=2$.

(vi) There exists a distinct covering system with $a=5$, $b=4$, and $c=1$ not using moduli which are multiples of $405$.

(vii) If $a=6$ and $c=1$, then $b \geq 3$.

(viii) There exists a distinct covering system with $a=6$, $b=3$, and $c=1$ not using moduli $270,540$, and $720$.

\end{theorem}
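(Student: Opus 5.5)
The plan is to split Theorem \ref{1.8} into two kinds of claims: the nonexistence statements (i), (ii), (iv), (vii), and the existence statements (iii), (v), (vi), (viii).

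\textbf{Nonexistence.} Each nonexistence claim reduces to finitely many maximal cases. Suppose a distinct covering has $L=2^a3^b5^c$ and every modulus is at least $6$. Then all moduli lie among the divisors $d\geq 6$ of $L$. Enlarging $L$ only enlarges the set of admissible moduli. So it suffices to rule out the largest forbidden exponent triples:
\begin{itemize}
\item for (i): $a=3$ (with $b\le 3$, so $L=2^3 3^3 5^3$ up to the ordering constraint), and $b=1$ (so $c=1$, with $a$ arbitrary);
\item for (ii): $a=4$ with $b=2$ (so $L=2^4 3^2 5^2$), and $a=4$, $b\in\{3,4\}$, $c=1$;
\item for (iv): $L=2^5 3^3 5$;
\item for (vii): $L=2^6 3^2 5$.
\end{itemize}
The case $b=c=1$ with $a$ unbounded is infinite, so it cannot be checked directly. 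For it I would use a density argument. Moduli of the form $2^i3^\epsilon5^\delta$ with $\epsilon,\delta\le 1$ behave like a covering in which the $2$-part can be lifted. The standard fact is that a system whose moduli have the form $2^i\cdot d$ with $d\mid 15$ covers only if, after reducing modulo $15$, each residue class mod $15$ is covered by the moduli $2^i d$ treated as a covering of $\mathbb{Z}$ by powers of $2$.

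This suggests the estimate to aim for. For a fixed residue class $r \bmod 15$, the classes available are:
\begin{itemize}
\item the distinct powers $2^i\geq 8$, each hitting $r$;
\item the moduli $2^i\cdot 3$ with $i\ge 1$, each hitting $r$ only through its single residue mod $3$;
\end{itemize}
and similarly for the moduli involving $5$ and $15$. Averaging over $r$ shows that some class $r$ receives total density below $1$. The available density is roughly
\[
\sum_{i\ge 3} 2^{-i}+\frac{1}{3}\sum_{i\ge1}2^{-i}+\frac15\sum_{i\ge 1}2^{-i}+\frac{1}{15}\sum_{i\ge0}2^{-i},
\]
adjusted for the excluded small moduli. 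A rigorous version needs the paper's ``new estimate on the density of a set covered by a system of congruences'', which accounts for overlap between the different classes. I expect this to be the main obstacle, and I would prove it by induction on $a$, peeling off the top power of $2$.

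The remaining finite cases I would settle by integer programming in the style of McNew--Setty. Take one binary variable for each (modulus, residue) pair. Require every residue mod $L$ to be covered, and require at most one residue per modulus. Then certify infeasibility with Gurobi. For the larger $L$ such as $2^5 3^3 5$, I would first reduce the problem: fix symmetry (for example, the residue of the modulus $6$) and use the density bound to prune.

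\textbf{Existence.} For (iii), (v), (vi), (viii), I would solve the same integer program with the extra constraints forbidding the named moduli (for example $180$ and $900$). I would then exhibit the resulting covering explicitly. Verifying it is a finite check modulo $L$.
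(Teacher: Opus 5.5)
Your plan has the same overall structure as the paper's proof: integer-programming infeasibility for finitely many maximal $L$, a density argument for the infinite family $b=c=1$, and explicit coverings for existence. It differs in three places.

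First, the paper gets $b\ge 2$ by invoking Theorem~\ref{1.7}(i), i.e.\ the $m=5$ lemma of Section~\ref{nonexist}. That lemma genuinely needs the inclusion--exclusion estimate of Theorem~\ref{1.9}, because for $m=5$ the reciprocal sum is $\frac{67}{60}-\frac{8}{5\cdot 2^a}>1$ once $a\ge 4$. Your direct treatment of $m=6$ is more elementary, and you overestimated its difficulty. The sum you displayed is exactly $\frac14+\frac13+\frac15+\frac{2}{15}=\frac{11}{12}$; for finite $a$ it is $\frac{11}{12}-\frac{8}{5\cdot 2^a}$. Averaging the per-class densities over the $15$ classes mod $15$ just reproduces $\sum 1/m_i$, so the plain union bound already gives density $<1$. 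No overlap accounting, no appeal to the paper's new estimate, and no induction on $a$ are needed.

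Second, the paper disposes of $L=2^4 3^2 5^2$, $2^5 3^3 5$ and $2^6 3^2 5$ at once by Theorem~\ref{Klein}(ii), since all three are below $5040$. It runs new integer programs only for $2^3 3^3 5^3$ (with presets $7 \bmod 8$, $8 \bmod 9$, $24 \bmod 25$, taking over $9000$ seconds) and for $2^4 3^4 5$. Recomputing all five, as you propose, is the same kind of argument, since Theorem~\ref{Klein} was itself proved by integer programming.

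Third, for (iii), (v), (vi), (viii) the paper supplies explicit coverings that can be checked by hand by tracking the uncovered set. Part (iii) is a modification of the covering in Theorem~\ref{1.7}(vi), and (v) is Krukenberg's covering. Your proposal only describes a search, so those parts are a plan rather than a proof. They become proofs once the search actually returns coverings, which you then display and verify. You should also check that the LCM is exactly $L$; this can always be arranged by adding an unused admissible modulus.
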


To show the nonexistence of certain covering systems we will need the following theorem based on the Inclusion-Exclusion Principle. To simplify the notation, when clear from the context, we will denote $\gcd (a,b)$ by $(a,b)$.

The following theorem is somewhat similar to Lemma 2.1 of Filaseta et al. \cite{Filaseta}.

\begin{theorem} \label{1.9}
  Let $M=\{m_1,m_2,\ldots,m_n\}$ be a set of distinct integers, all greater than one such that $\LCM [m_1, \ldots, m_n] = 2^a 3^b 5^c$ for some nonnegative integers $a,b,c$.  Let $\mathcal{C}$ be a system of congruences with distinct moduli, with all moduli in $M$. Then, the density of the set of integers covered by at least one of the congruences in $\mathcal{C}$ is at most \[\sum \frac{1}{m_i}-\sum_{(m_i,m_j)=1}\frac{1}{m_im_j}+\sum_{(m_i,m_j)=(m_j,m_k)=(m_k,m_i)=1}\frac{1}{m_im_jm_k}.\] In particular, if the above upper bound is less than $1$, then $\mathcal{C}$ does not cover the integers. 
\end{theorem}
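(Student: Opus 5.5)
The plan is to group the congruences by the set of primes dividing their moduli, use independence across coprime groups to compute the covered density exactly in terms of the group unions, and then compare with the stated bound.

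\emph{Pointwise counting will not work.} One might hope for an indicator inequality $1 \le N - E + T$, where for a fixed $x$ we let $N$ be the number of congruences covering $x$, $E$ the number of coprime pairs among them, and $T$ the number of pairwise coprime triples among them. This fails: congruences modulo $2,4,8$ and $3,9,27$ can all cover the same $x$, giving $N-E+T = 6-9+0<0$. So the argument must use the arithmetic structure of the moduli rather than a Bonferroni inequality at each point.

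\emph{Grouping and the exact formula.} For each nonempty $S\subseteq\{2,3,5\}$, let $\mathcal{C}_S$ be the congruences whose modulus has prime support exactly $S$. Let $B_S$ be the union of the residue classes in $\mathcal{C}_S$, let $u_S$ be its density, and set $\sigma_S=\sum_{m_i\in\mathcal{C}_S}1/m_i \ge u_S$.

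The first step is to identify which pairs and triples occur in the stated bound. Two moduli greater than $1$ are coprime exactly when their supports are disjoint. A pairwise coprime triple must consist of a pure power of $2$, a pure power of $3$ and a pure power of $5$. No pairwise coprime quadruple exists. Hence the stated bound equals
\[
F(\sigma)=\sum_S \sigma_S-\sigma_2\sigma_3-\sigma_2\sigma_5-\sigma_3\sigma_5-\sigma_2\sigma_{35}-\sigma_3\sigma_{25}-\sigma_5\sigma_{23}+\sigma_2\sigma_3\sigma_5,
\]
where the $\sigma$'s are evaluated on the classes just described.

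The second step uses the Chinese Remainder Theorem: if $S\cap S'=\emptyset$, then $B_S$ (periodic mod a product of powers of primes in $S$) and $B_{S'}$ are independent events in $\mathbb{Z}/L$. The covered set is contained in $B_2\cup B_3\cup B_5\cup B_{35}\cup B_{25}\cup B_{23}\cup B_{235}$. Its density is at most
\[
d(B_2\cup B_3\cup B_5)+d(B_{35}\setminus B_2)+d(B_{25}\setminus B_3)+d(B_{23}\setminus B_5)+u_{235}.
\]
By independence this equals $1-(1-u_2)(1-u_3)(1-u_5)+u_{35}(1-u_2)+u_{25}(1-u_3)+u_{23}(1-u_5)+u_{235}$, which expands to exactly $F(u)$. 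So the density is at most $F(u)$ with $u_S\le\sigma_S$.

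\emph{Main obstacle: passing from $F(u)$ to $F(\sigma)$.} $F$ is multilinear but not monotone: for instance $\partial F/\partial u_2=(1-u_3)(1-u_5)-u_{35}$ can be negative. We may assume $F(\sigma)<1$, since otherwise the claim is trivial because densities are at most $1$.

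The first approach is to raise the variables from $u_S$ to $\sigma_S$ one at a time, in the order $u_{235},u_{23},u_{25},u_{35}$ (these have positive coefficients $1-u_p$ or $1$) and then $u_2,u_3,u_5$. For a pure-prime variable, $F$ is linear in it. A negative slope $(1-u_3)(1-u_5)-u_{35}<0$ should force the current value of $F$ to be large. The idea is to use the identity
\[
1-F=(1-u_2)\bigl[(1-u_3)(1-u_5)-u_{35}\bigr]-u_{25}(1-u_3)-u_{23}(1-u_5)-u_{235}
\]
to derive a contradiction with $F(\sigma)<1$. A possible gap in this argument is whether the endpoint value can dip below $1$; checking this may need the constraint $u_2\le 1$ together with $\sigma_2\ge u_2$.

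If this step resists, the fallback is to refine the first step. Instead of collapsing each class to its union, subtract only those overlaps that are exact products under the Chinese Remainder Theorem. This bounds the density termwise by $\sum 1/m_i$ minus genuinely computed coprime intersections. That version needs only the upper Bonferroni-type inequality applied within the three independent pure-prime families, which should yield $F(\sigma)$ directly.
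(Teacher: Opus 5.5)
Your main line is correct and gives a genuinely different proof from the paper's. The step you flagged closes, but not for the reason you guess: what is needed is $\sigma_p\le 1$, not $u_p\le 1$ together with $\sigma_p\ge u_p$.

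Here is how it goes. Raise the mixed variables first; their slopes are $1$ and $1-u_p\ge 0$, so $F$ does not decrease. Now suppose a pure variable $x_p$ has slope $c_p<0$. Evaluate your identity (and its analogues for $x_3,x_5$) at the endpoint $x_p=\sigma_p$. It reads $1-F=(1-\sigma_p)c_p-(\text{nonnegative terms})$, so $F\ge 1$ there provided $1-\sigma_p\ge 0$. This holds because the moduli are distinct: $\sigma_2<\sum_{k\ge1}2^{-k}=1$, $\sigma_3<1/2$, $\sigma_5<1/4$.

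So each of your seven steps either does not decrease $F$ or ends at a value $\ge 1\ge D$, where $D$ is the covered density. The invariant $F\ge D$ therefore carries from $F(u)$ to $F(\sigma)$ directly, and neither the contradiction framing nor the fallback is needed. This is the only place distinctness enters your proof. Since you only use $u_S\le\sigma_S$ and $\sigma_p\le 1$, you may also compute $\sigma_S$ over all of $M$ even if $\mathcal{C}$ omits some moduli.

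The paper instead inducts on the number of congruences.
\begin{enumerate}
\item It first modifies residues so that congruences whose moduli are powers of the same prime are pairwise disjoint. This is possible by distinctness and does not shrink the covered set.
\item It then adds one congruence at a time, chosen by cases: a modulus divisible by $30$ if one exists, else one divisible by $6$, $10$ or $15$, else a prime power.
\item The choice guarantees that the union $A_0$ of the earlier classes with moduli coprime to the new one has exactly computable size. The paper combines $|A\cup C_{u+1}|\le |A|+(L-|A_0|)/m_{u+1}$ with the induction hypothesis.
\end{enumerate}

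You replace this with one global computation. CRT independence across disjoint prime supports, plus a single union bound, gives density $\le F(u)$ with the inclusion--exclusion terms matching exactly. All remaining difficulty is isolated in comparing the multilinear $F$ at $u$ and at $\sigma$.

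What your route buys: it needs no residue normalization and no case analysis. It shows transparently why three primes are special: there is only one type of pairwise coprime triple, and each mixed support omits exactly one prime. It also yields the bound $F(u)$ in terms of actual union densities, which is at most $F(\sigma)$ whenever $F(\sigma)<1$.

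What the paper's route buys: its inductive skeleton is independent of the prime structure. The three-prime hypothesis enters only through the lower bound on $|A_0|$, which is the natural point of attack for Problem 4.
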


Let $S$ be an infinite set of positive integers, all greater than $1$, and
${\mathcal C} = \{ n \equiv \modd{a_k} {n_k} : n_k \in S \}$. 
For each $k\in \mathbb{N}$  let $g(k)$ be the least common multiple of $n_1, n_2, \ldots, n_k$ and let $f(k)$ be the number of integers in the interval $0\leq x < g(k)$ which satisfy at least one of the congruences $x\equiv a_i\pmod{n_i}$, $i=1,\ldots, k$. We will say that $\mathcal{C}$ is a {\it strong infinite covering system} if $\lim _{k\rightarrow \infty} \frac{f(k)}{g(k)}=1$ and  every integer satisfies at least one of the congruences in ${\mathcal C}$.
Our definition is an extension of the definition of Krukenberg \cite{Krukenberg}  that $\mathcal{C}$ is an infinite covering system if $\lim _{k\rightarrow \infty} \frac{f(k)}{g(k)}=1$.
 For example, if $p_k$ is the $k$th prime number then ${\mathcal C}=\{ x \equiv \modd{0} {p_k} | k \in {\mathbb N} \}$ is an infinite covering system but not a strong infinite covering system since $1$ and $-1$ are not covered by any of the congruences. On the other hand, ${\mathcal C}_1=\{ x \equiv \modd{r_k} {p_k} | k \in {\mathbb N} \}$ where $\{r_k \}$ is the sequence $\{0,1,-1,2,-2,\ldots\}$ is a strong infinite covering system. Final example, ${\mathcal C}_2=\{ x \equiv \modd{r_k} {(k+1)!} | k \in {\mathbb N} \}$ is not an infinite covering, although every integer satisfies at least one of the congruences, we have $f_k/g_k < e-2 < 1$ for all $k \in {\mathbb N}$. 

The simplest examples of infinite covering systems occur  when $S=\{ 2^n : n \in {\mathbb N}\}$.

\begin{theorem} \label{1.10}
There exists a strong distinct infinite covering system with the least modulus $m=6$ and all moduli of the form $2^a 3^b 5^c$ where $a$, $b$, and $c$ are nonnegative integers with $b \leq 2$ and $c \leq 1$.
\end{theorem}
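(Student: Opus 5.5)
The plan is to build $\mathcal{C}$ in two parts. The first part is a finite system $\mathcal{C}_0$ with distinct moduli of the allowed shape that covers every integer except those in a single residue class $r_0 \pmod{2^{A}}$. The second part is an infinite tail that uses only the pure powers of two $2^{A+1}, 2^{A+2}, \ldots$ and repeatedly halves the uncovered class.

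\emph{The tail.} Let $U_0 = \{x \equiv r_0 \pmod{2^A}\}$. For $k \geq 1$, suppose $U_{k-1}$ is a single class $r_{k-1} \pmod{2^{A+k-1}}$. It splits into the two classes $r_{k-1}$ and $r_{k-1}+2^{A+k-1} \pmod{2^{A+k}}$. We add to $\mathcal{C}$ the congruence for one of these two classes with modulus $2^{A+k}$, and let $U_k$ be the other class, with residue $r_k$.

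\emph{Density.} Take any enumeration of $\mathcal{C}$ that lists $\mathcal{C}_0$ first and then the tail in order. After the first $|\mathcal{C}_0|+k$ congruences, the uncovered proportion modulo $g$ is exactly $2^{-(A+k)}$. Hence $f(k)/g(k) \to 1$.

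\emph{Every integer is covered.} An integer $n$ escapes $\mathcal{C}$ only if $n \in U_k$ for every $k$. Then $n$ equals the $2$-adic limit $\alpha = \lim r_k$. We choose at each step which half to keep so that the binary digits of $\alpha$ alternate $0,1,0,1,\ldots$ from position $A$ on. Such an $\alpha$ is not eventually constant, so it is not a rational integer. Therefore every integer lies in some class we removed, or is covered by $\mathcal{C}_0$, and $\mathcal{C}$ is a strong infinite covering.

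\emph{Distinctness and shape.} The tail moduli $2^{A+k}$ with $k \geq 1$ exceed $6$, are distinct, and have $b=c=0$. So it suffices that $\mathcal{C}_0$ has distinct moduli $n \geq 6$ of the form $2^a 3^b 5^c$ with $a \leq A$, $b \leq 2$, $c \leq 1$, and does not use the modulus $2^A$ itself. That last condition can be dropped if $2^A$ is never reused in the tail, which holds automatically since the tail starts at $2^{A+1}$.

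The problem is thus reduced to the following finite one. Find $A$ and a system $\mathcal{C}_0$ with distinct moduli in
\[
S_A=\{\,2^a 3^b 5^c \geq 6 : a\leq A,\ b\leq 2,\ c\leq 1\,\}
\]
that covers the complement of one class $\pmod{2^A}$. Equivalently, we need a distinct covering with minimum modulus $2^A$ or $6$ whose moduli lie in $S_A \cup \{2^A\}$, in which the class modulo $2^A$ appears as an extra ``free'' congruence.

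This finite step is the main obstacle. By Theorem~\ref{1.8}(iv), a genuine distinct covering with $m=6$, $a=5$, $b\le 2$, $c=1$ does not exist, so the extra freedom from the uncovered class is essential. A density count shows the construction is at least plausible. The sum of $1/n$ over all $n \geq 6$ of the allowed shape is
\[
2\cdot\frac{13}{9}\cdot\frac{6}{5} - \left(1+\tfrac12+\tfrac13+\tfrac14+\tfrac15\right) = \frac{52}{15}-\frac{137}{60} \approx 1.18 > 1,
\]
and a comfortable margin remains when $a \le A$ for moderate $A$. I would first try $A = 6$ or $7$, with $L_0 = 2^A\cdot 45$. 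For that $A$ I would solve the 0--1 integer program of McNew and Setty, as used by the second author: one variable for each pair (modulus in $S_A$, residue), at most one residue per modulus, and every residue modulo $L_0$ outside the class $r_0$ covered. If the solver is infeasible, I would increase $A$. One could alternatively start from Krukenberg's style of greedy residue assignment and finish with the solver. Once a solution is found, it is listed explicitly and its covering property checked by direct computation modulo $L_0$, which completes the proof together with the tail argument above.
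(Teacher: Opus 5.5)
The proposal stops exactly at the step that carries all the content, and that step cannot be carried out, at least for the values of $A$ you suggest. Your $\mathcal{C}_0$ must leave uncovered at most the $45$ residues modulo $2^A\cdot 3^2\cdot 5$ that lie in one class modulo $2^A$. Also $A\ge 2$ is forced, since otherwise your tail starts at a modulus $\le 4$. But the second table in Section~\ref{IP} shows that even using every divisor $d\ge 6$ of $2^A\cdot 3^2\cdot 5$, at least $51$ residues stay uncovered for $A=2$, and at least $52$ for every $3\le A\le 9$. So $A=6,7$, and indeed every $A\le 9$, is infeasible.

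Nor should you expect larger $A$ to help, because the ``extra freedom'' of a free class is illusory. Suppose only the class $x\equiv r_0 \pmod{2^A}$ is left. Write $x=r_0+2^Ay$, take $y\equiv 0\pmod 2$, and cover the odd $y=1+2z$ by Davenport's covering \eqref{m2M12} in the variable $z$. This adds six congruences with the distinct moduli $2^{A+1},2^{A+2},2^{A+3},3\cdot2^{A+1},3\cdot2^{A+2},3\cdot 2^{A+3}$, none of which lies in $S_A$. The result is a finite distinct covering with all moduli at least $6$ and $L\mid 2^{A+3}3^25$. By Theorem~\ref{Krukenberg2}(i) and the lemma of Section~\ref{nonexist}, it must have $L=2^{A+3}3^25$. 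That is exactly Problem~2 of Section~2, which the authors conjecture has a negative answer. It is the same obstruction you cited via Theorem~\ref{1.8}(iv). Your reduction trades the theorem for a finite problem at least as hard as that open problem.

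The missing idea is that the infinite part must use moduli with odd part greater than $1$, not only powers of $2$. The paper hands off six residue classes to chains $2^a\uparrow d$, one for each odd $d\mid 45$:
\begin{itemize}
\item $(11)$ via $8,16,\dots$;
\item $(10|1)$ via $24,48,\dots$;
\item $(10|00)$ via $72,144,\dots$;
\item $(00|*|3)$ via $40,80,\dots$;
\item $(0|1|0)$ via $60,120,\dots$;
\item $(0|00|0)$ via $180,360,\dots$.
\end{itemize}
The finitely many moduli $6,12,9,18,36,10,20,15,30,45,90$ handle the rest. This uses every admissible modulus $\ge 6$, including the infinitely many $2^ad$ with $d\in\{3,9,5,15,45\}$ that your design discards.

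Your tail device is correct and worth keeping: choose the halves so that the $2$-adic limit is not eventually constant. The paper's chains in fact need it. A chain $2^a\uparrow d$, taken literally with an integer base point $r$, never covers $r$ itself; for example $2\uparrow$ with $r=0$ misses $0$. So in each chain the residues should be chosen, as you did, so that the limit is not an otherwise uncovered integer.
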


Next, we show that it is possible to have $m=8$ and $L=2^a3^b5^c$.

\begin{theorem} \label{1.11}
There exists a distinct covering system with the least common multiple of its moduli $L = 2^8 3^3 5^2$  and least modulus $m=8$.

\end{theorem}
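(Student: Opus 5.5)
The proof will be constructive: we exhibit an explicit distinct covering system whose moduli are divisors of $L=2^8 3^3 5^2=172800$ that are at least $8$, with at least one modulus divisible by $2^8$, one by $3^3$ and one by $5^2$, so that the LCM is exactly $L$. Verifying such a system is a finite computation: check that each residue class modulo $L$ lies in at least one congruence. The work is in finding the system. I would build it in layers through a sequence of refinements of the integers, rather than searching over all divisors at once.

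First, a rough budget. The divisors of $L$ that are at least $8$ have reciprocal sum $\prod_{p}\frac{1-p^{-(e_p+1)}}{1-1/p}$ minus the contributions of $1,2,3,4,5,6$. This comes to about $3.9-1-\frac12-\frac13-\frac14-\frac15-\frac16\approx 1.45$. The slack is therefore modest, and Theorem~\ref{1.9} should be used along the way to rule out candidate partial configurations whose modulus sets are too "coprime-heavy". Concretely:
\begin{enumerate}[(1)]
\item Use all moduli divisible by $2$ or $3$ that we can afford, and cover the class $0 \pmod{30}$ structure greedily in the style of Krukenberg's constructions. Start with the moduli $8,9,10,12,15,16,18,20,24,\dots$ and place their residues so that they overlap as little as possible. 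This means choosing residues so that any two congruences with non-coprime moduli are disjoint whenever possible, while accepting the unavoidable overlaps between coprime moduli such as $8$ and $9$, $8$ and $15$, $9$ and $10$.
\item After these moduli are placed, describe the uncovered set as a union of residue classes modulo some intermediate $N$, such as $N=2^4 3^2 5$. Then finish it off by covering each remaining class $r \pmod{N}$ using moduli $N\cdot d$ with $d \mid L/N$. Within one class this is a smaller covering problem, and pure powers of $2$ (halving repeatedly up to $2^8$) can be combined with factors $3$ and $5^2$ to absorb the last pieces.
\end{enumerate}

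For the search itself I would follow McNew and Setty and the second author \cite{Klein}. The problem is set up as an integer program with binary variables $x_{n,r}$ for each divisor $n\ge 8$ of $L$ and each residue $r \bmod n$, subject to three constraints:
\begin{itemize}
\item $\sum_r x_{n,r}\le 1$ for each modulus $n$;
\item for every class $t \bmod L$, $\sum_{n} x_{n,\,t \bmod n}\ge 1$;
\item at least one modulus divisible by each of $2^8$, $3^3$ and $5^2$ is used.
\end{itemize}
There are $172800$ covering constraints, so the direct program is large. I would reduce it by symmetry: fix the residues of the small moduli by hand using the layered plan above, and solve only the residual problem modulo the needed lift. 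The solution is then printed as an explicit table, and a short independent program checks coverage.

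The main obstacle is step (1)–(2): finding a placement of the small moduli that leaves a residual set coverable within the remaining reciprocal budget. If the first hand placement fails, I would let the solver choose the residues of the moduli up to $60$ as well, on the reduced space modulo $2^4 3^2 5$, before lifting.
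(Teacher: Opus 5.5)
Your proposal does not prove the statement. The theorem is purely existential, and you never exhibit a covering or give any argument that one exists. Everything is deferred to a search: a greedy placement, then an integer program, with a fallback ``if the first hand placement fails.'' Nothing guarantees that this search ends with a solution, and the authors remark that integer programming is currently not computationally feasible for the $m=8$ problem.

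The concrete decomposition you sketch in step (2) also cannot succeed as stated. Suppose the uncovered set after step (1) is a union of classes modulo $N=2^4 3^2 5=720$, so step (1) uses divisors of $720$, and step (2) uses only moduli $720d$ with $d\mid 240$. Then the total reciprocal mass available is
\[
\sum_{\substack{d\mid 720\\ d\ge 8}}\frac1d+\sum_{\substack{d\mid 240\\ d>1}}\frac{1}{720d}=\frac{109}{120}+\frac{7}{2400}=\frac{2187}{2400}<1,
\]
so no covering can arise. The scheme discards every modulus, such as $25,27,32,50,54,\dots$, that neither divides nor is a multiple of $N$. Your global budget is also off: $\sigma(L)/L=633640/172800\approx 3.667$, so the slack is about $1.217$, not $1.45$. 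The system therefore has to be quite tight.

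The idea you are missing is an explicit self-similar construction, which is what the paper gives.
\begin{itemize}
\item Stage 1 uses only moduli $2^a3^b$: the powers $2^3,\dots,2^7$, then moduli $3\cdot 2^k$, $9\cdot 2^k$, $27\cdot 2^k$, and finally $2^8$ with residue $254$. This shrinks the uncovered set to the even integers $\not\equiv 254\pmod{256}$ that are $\equiv 0\pmod{27}$ or $\not\equiv 0\pmod 3$. Since it is cut out by moduli prime to $5$, this set is a union of full classes modulo $2^8 3^3$.
\item Stage 2 uses moduli $5\cdot 2^a3^b$, namely $10$, $5\cdot 2^k$, $15$, $30$, $15\cdot 2^k$, $45$, $90$, $45\cdot 2^k$, $135$, $270$. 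These cover the set in the four classes $1,2,3,4\pmod 5$, leaving exactly its intersection with $0\pmod 5$.
\item Stage 3 repeats the identical pattern on the second base-$5$ digit with moduli $25\cdot 2^a3^b$. The modulus $25$ is now allowed, and it covers the sub-class $0\pmod{25}$ that the forbidden modulus $5$ could not cover in stage 2.
\end{itemize}
This repetition is what lets the tight budget work, and it reduces verification to a short hand check. Without it, or an actual solver output, your text is a search strategy rather than a proof.
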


\begin{theorem} \label{1.12}
Suppose ${\mathcal C}$ is a distinct covering system with all moduli of the form $2^a 3^b 5^c$ with $a$, $b$, and $c$  nonnegative integers. Then $m \leq 9$.

\end{theorem}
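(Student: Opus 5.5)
The plan is to show that no distinct covering can have all moduli $5$-smooth and $m\ge 10$. Since a covering with larger least modulus uses a subset of the moduli available when $m=10$, it suffices to treat $m=10$. The tool is Theorem \ref{1.9}. Take $M$ to be the set of moduli actually used; it is a finite subset of
\[
S=\{n\ge 10 : n=2^a3^b5^c\}.
\]
Group the elements of $S$ by their support $T\subseteq\{2,3,5\}$, and let $A_T=\sum 1/n$ over the $n\in S$ with support exactly $T$. The three terms of the bound in Theorem \ref{1.9} become:
\begin{itemize}
\item the single sum, at most $\sum_T A_T$;
\item the coprime-pair sum, $\sum A_{T}A_{T'}$ over unordered pairs of disjoint supports;
\item the coprime-triple sum, at most $A_{\{2\}}A_{\{3\}}A_{\{5\}}$.
\end{itemize}
These series are geometric and explicit: $A_{\{2\}}=1/8$, $A_{\{3\}}=1/18$, $A_{\{5\}}=1/20$, $A_{\{2,3\}}=1/3$, $A_{\{2,5\}}=1/4$, $A_{\{3,5\}}=1/8$, $A_{\{2,3,5\}}=1/8$. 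Their total is
\[
\frac{15}{4}-\Bigl(1+\tfrac12+\tfrac13+\tfrac14+\tfrac15+\tfrac16+\tfrac18+\tfrac19\Bigr)\approx 1.0639 .
\]

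The difficulty is that the coprime-pair term must be subtracted in full, not bounded. Over all of $S$ it is
\[
\tfrac1{144}+\tfrac1{160}+\tfrac1{360}+\tfrac1{64}+\tfrac1{72}+\tfrac1{60}\approx 0.0622,
\]
and the triple term is about $0.0003$. So the full-$S$ value is about $1.002$, just above $1$. Moreover, for a finite $M\subset S$ the expression is not monotone under enlarging $M$, so we cannot simply pass to $M=S$. The honest version is therefore this: for finite $M$, write the bound as $F(M)$ and show $F(M)<1$ for every finite $M\subseteq S$. A first step is to note that adding an element $n$ to $M$ changes $F$ by
\[
\frac1n\Bigl(1-\sum_{(n,m_j)=1}\frac1{m_j}+\cdots\Bigr)>0,
\]
which is positive because the coprime sums are far below $1$. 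Hence $F$ is increasing in $M$ and $\sup F=F(S)\approx1.002$. So Theorem \ref{1.9} alone just fails, and this is the main obstacle.

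To close the gap I would exploit the slack in Theorem \ref{1.9} by first restricting to a residue class. Every congruence with modulus $5k$ meets exactly one class mod $5$, and there it acts as a congruence mod $k$. Averaging over the five classes, some class $r$ receives congruences with moduli $5k$ of total reciprocal weight at most one fifth of the total. Inside that class, rescaled to the integers, we get a system whose moduli are the $n\in S$ with $5\nmid n$, together with a set of moduli $k$ of small total weight. Running the same inclusion–exclusion estimate in this class (collisions $k=n$ only help, and we can bound them crudely) replaces the $5$-part of the single sum by something much smaller. A rough count suggests this loses far more than the $0.002$ excess, giving a relative density below $1$, so that class is not covered. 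If the mod-$5$ averaging turns out too weak once the now-nondistinct moduli are accounted for, I would instead average over classes mod $2$, where the largest weight sits. Failing that, I would truncate $S$ at a bound $N$, use the monotonicity above to control the tail by an explicit geometric estimate, and settle the finite case $M\subseteq S\cap[10,N]$ by the integer-programming method of McNew and Setty used elsewhere in the paper.
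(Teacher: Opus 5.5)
Your reduction to $m=10$, your support-class sums, and the resulting value $\frac{383}{360}-\frac{179}{2880}+\frac{1}{2880}=\frac{481}{480}$ of the bound in Theorem~\ref{1.9} agree exactly with the paper. Your monotonicity remark is also correct: adding a modulus raises the bound, so any finite $M\subseteq S$ is dominated by $S$. This fills a step the paper leaves tacit.

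The gap is that nothing you write brings the bound below $1$, and the route you favor rests on a miscount. The moduli divisible by $5$ have total reciprocal weight $\frac{11}{20}$, so the lightest class $r \bmod 5$ receives $\sum 1/(5k)\le\frac{11}{100}$. After rescaling, that is relative weight $\sum 1/k\le\frac{11}{20}$, which is exactly the original $5$-part, not something much smaller. Add the $\frac18+\frac1{18}+\frac13=\frac{37}{72}$ from the moduli prime to $5$, and the single sum inside a class averages to exactly $\frac{383}{360}$ over the five classes. So averaging gives no first-order gain. Anything you win must come from an imbalance of weights among the classes, or from new overlaps of pairs $5k,5k'$ with $\gcd(k,k')=1$ placed in the same class. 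You give no argument that either is forced to exceed the excess $\frac1{480}$. In addition, the rescaled system is not distinct ($60\mapsto 12$, $80\mapsto 16$, \dots), so Theorem~\ref{1.9} does not apply to it as stated. Averaging mod $2$ has the same first-order problem, and truncation plus integer programming is a plan rather than a proof.

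The missing idea is a single overlap that Theorem~\ref{1.9} cannot see, since it subtracts only coprime pairs. The paper uses the moduli $10,16,18$ (add them if the covering omits them). Their pairwise gcds all equal $2$, so by pigeonhole two of these three congruences agree modulo $2$. They therefore intersect in an entire residue class modulo their lcm, which is $80$, $90$, or $144$. This uncounted overlap has density at least $\frac1{144}$, and $\frac{481}{480}-\frac1{144}=\frac{1433}{1440}<1$.

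The paper simply subtracts this overlap. To justify that, rerun the last inductive step of Theorem~\ref{1.9} with $m_{u+1}=18$ for the pairs $\{16,18\}$ and $\{10,18\}$, and with $m_{u+1}=10$ for $\{10,16\}$. Then $A_0$ consists of powers of a single prime, and the step still produces the bound $F(M)$. The lower bound for $|A\cap C_{u+1}|$ also picks up the extra overlap minus its intersection with $A_0$. That gain is at least $\frac{19}{2880}$ in every case, comfortably more than $\frac{6}{2880}=\frac1{480}$.
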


The rest of the paper is organized as follows: In Section 2 we pose some open problems. In Section 3 we introduce the notation that we will be using. Section 4 contains the proofs of the theorems. In Section 5 we prove the nonexistence of certain covering systems using density considerations. Section 6 is on the use of integer programming to prove the nonexistence of certain coverings.

\section{Open Problems} \label{open pr}

{\bf Problem 1.} Establish whether or not there exists a distinct covering system with minimum modulus 9 and an LCM of its moduli of the form $2^a3^b5^c$, either finite or infinite.

We expect that such a covering does not exist.

\noindent
{\bf Problem 2.} Establish whether or not there exists a positive integer $a$ such that there exists a distinct covering system with minimum modulus $6$ and an LCM of its moduli $2^a 3^2 5$.

We conjecture that such a covering does not exist.

\noindent
{\bf Problem 3.} Find all triples of positive integers $(a,b,c)$ with $a \geq b \geq c$ such that there exists a distinct covering system with minimum modulus $m=8$ and an LCM of its moduli $L=2^a 3^b 5^c$. 

Solving the above problem presents several difficulties. First, most of the covering systems involved will contain more than one hundred congruences. Next, there is no analogue of Theorem \ref{Klein} in the case $m=8$. Finally, solving Problem 3 using integer programming is currently not computationally feasible.

\noindent 
{\bf Problem 4.} Determine whether Theorem 1.9 holds without the condition
$L=2^a3^b5^c$. If not, find less restrictive conditions for the theorem to hold.

Currently, we can show that the condition $L=2^a 3^b 5^c$ can be dropped if $n=3$. The proof of the theorem also works if one replaces $2,3,5$ in the LCM condition with any three distinct primes.

\section{Notation} \label{notation}

By the Chinese Remainder Theorem, if $n_1, \ldots, n_k$ are $k$ pairwise coprime positive integers, then a congruence modulo $n_1 \cdots n_k$ is equivalent to a system of $k$ congruences with moduli $n_1, \ldots, n_k$ respectively.

For example, since $180 = 4 \cdot 5 \cdot 9$, the congruence $x \equiv \modd{11} {180}$ is equivalent to the system of congruences $x \equiv \modd{3} {4}$, $x \equiv \modd{1} {5}$, and $x \equiv \modd{2} {9}$.
A number of authors have used a notation along the lines of $[3,1,2], \ [4,5,9]$ (an ordered list of the residues followed by an ordered set of the moduli of the congruences.)

Since the prime divisors of the moduli of the congruences we will be using are only $2$, $3$, or $5$, we will use a simplified notation. We represent the congruence $x \equiv \modd{r} {2^a3^b5^c}$ as $(k|l|m)$, where to obtain $k$ we find the remainder of $r$ when divided by $2^a$ and write the remainder in base $2$ with the digits {\bf reversed}; we proceed similarly for $l$, using $3^b$ instead and base $3$; and $5^c$ for $m$ and base $5$. We use $a$ base $2$ digits for $k$, $b$ base $3$ digits for $l$, and $c$ base $5$ digits for $m$. 

For example, $[3,2,1], \ [4,9,5]$  becomes $(11|20|1)$. 

Also, if $a=0$, we set $k=*$, if $b=0$, then $l=*$, and if $c=0$, then $m=*$.

For example, $[2,1], \ [4,5]$ becomes $(01|*|1)$. 

Finally, for brevity, we skip the trailing $*$'s. For example, we write $(0)$
for the congruence $x \equiv \modd{0} {2}$  rather than $(0|*|*)$.

The notation we are using is somewhat aligned with the view of Balister et al. \cite{Balister} and authors before them of constructing covering systems as the problem of covering all lattice points in a rectangular box (in ${\mathbb R}^3$ in our case) by certain points, lines, and planes (and hyperplanes in higher dimensions). 

\section{Proofs of the theorems} \label{proofs}

\begin{proof}[Proof of Theorem \ref{1.5}]

(i) The fact that there is no distinct covering system with $m=2$ and $L=30$ follows from the work of Krukenberg on squarefree coverings \cite{Krukenberg}; see Theorem 4.3.

(ii) This follows from the existence of a covering system with $m=2$ and $L=12$, see \eqref{m2M12}.

(iii) Here we need to show that there is no distinct covering system with $m=3$ and $L=60$. This follows from Theorem \ref{Dalton} (i).

(iv) We need to construct a distinct covering system with $m=3$ and $L=180=2^23^25$ not using moduli $5$ and $180$.
Here is an example of such a covering system. 

The congruence modulo $4$ is $(11)$ and the uncovered set is $(0)$ and $(10)$. The congruences modulo $3,6, 12$ are $(*|2)$, $(0|1)$, and $(10|1)$ leaving us with uncovered set $(0|0)$ and $(10|0)$. The congruences modulo $9,18,36$ are $(*|02)$, $(1|01)$, $(10|00)$, and the uncovered set is $(0|00,01)$. Next, the congruences modulo $10$, $15$, and $30$ are $(0|*|4)$, $(*|0|3)$, and $(0|0|2)$ leaving us with uncovered set $(0|00,01|0,1)$. We use the congruences modulo $45$ and $90$ to finish the class $\modd{1} {5}$, $(*|00|1)$ and $(0|01|1)$ leaving us with uncovered set $(0|00,01|0)$. The congruence modulo $20$, $(01|*|0)$ reduces the uncovered set to $(00|00,01|0)$. We finish the covering with the congruence modulo $60$, $(00|0|0)$. 

(v) The existence of a distinct covering system with $m=3$, $L=180$ was established by Krukenberg \cite{Krukenberg}.
\end{proof}

\begin{proof}[Proof of Theorem \ref{1.6}]
(i) Here we need to show that there is no distinct covering system with $m=4$ and $L=900$. This was done via integer programming. The computation took less than a second. For more details, see Section \ref{IP}.

(ii) The fact that there is no distinct covering system with $m=4$ and $L=120$ follows from Theorem \ref{Dalton}, see (ii). 

(iii) A distinct covering system with $m=4$ and $L=360$ was constructed by Krukenberg \cite{Krukenberg}, see Example 5.4.

(iv) The fact that there is no distinct covering system with $m=4$ and $L=240$ follows from Theorem \ref{Dalton}, see (ii). 

(v) Here we need to show the existence of a distinct covering system with $m=4$ and $L=480=2^5\cdot 3 \cdot 5$ not using modulus $240$. Here is one example. The congruences modulo $4,8,16$, and $32$ are $(11), (101), (1001)$, and $(10001)$. The uncovered set at this stage is $(0)$ and $(10000)$. The congruences modulo $6,12,24,48$, and $96$ are $(0|2)$, $(01|1)$, $(001|1)$, $(1000|2)$, $(10000|1)$, leaving us with uncovered set $(0|0)$, $(000|1)$, and $(10000|0)$. The congruences modulo $5,10,20,40,80$, and $160$ are 
$(*|*|4)$, $(0|*|3)$, $(00|*|2)$, $(000|*|1)$, $(1000|*|3)$, and $(10000|*|2)$. The uncovered set at this stage is $(0|0|0)$, $(000,01|0|1)$, $(01|0|2)$, $(000|1|0)$, and $(10000|0|0,1)$.
We finish the covering with congruences modulo $15,30,60,120$, and $480$, 
$(*|0|0)$, $(0|0|1)$, $(01|0|2)$, $(000|1|0)$, and $(10000|0|1)$.

\end{proof}

\begin{proof}[Proof of Theorem \ref{1.7}]

(i) Here we need to show that for any $a \in {\mathbb N}$, there is no distinct covering system with $m=5$ and $L=2^a \cdot 3 \cdot 5$. The proof of this is given in Lemma 5.2.

(ii) Here we need to prove that there is no distinct covering system with $m=5$ and $L=1080$ or $L=1800$. The nonexistence of a distinct covering system with $m=5$ and $L=1080$ follows from Theorem \ref{Klein} (i). The nonexistence of a distinct covering system with $m=5$ and $L=1800$ was shown via integer programming. See Section 6 for details.

(iii) We need to construct a distinct covering system with $m=5$ and $L=5400=2^33^35^2$ not using moduli which are multiples of $675$. Here is one example. The congruence modulo $8$ is $(111)$ and the uncovered set is $(0,10,110)$. Next, we use the congruences  modulo $6,12,24$ to cover the class $\modd{2} {3}$, namely $(0|2)$, $(10|2)$, and $(110|2)$. The uncovered set is $(0,10,110|0,1)$. Next, we attack $\modd{1} {3}$. The congruences modulo $9,18,36,72$ are $(*|12)$, $(0|11)$, $(10|11)$, and $(110|11)$. The uncovered set now is $(0,10,110|0,10)$. The congruences modulo $27,54,108,216$ are $(*|102)$, $(0|101)$, $(10|101)$, and $(110|101)$. The uncovered set now is $(0,10,110|0,100)$. Next, we cover two classes mod $5$ using the congruences modulo $5,10,20,40$, namely $(*|*|4)$, $(0|*|3)$, $(10|*|3)$, and $(110|*|3)$. The uncovered set  is $(0,10,110|0,100|0,1,2)$. We cover two more classes modulo $5$ with congruences modulo $15,30,60,120$, $(*|0|2)$, $(0|0|1)$, $(10|0|1)$, $(110|0|1)$ and modulo $135,270,540,1080$,  $(*|100|2)$, $(0|100|1)$, $(10|100|1)$, $(110|100|1)$. The uncovered set now is $(0,10,110|0,100|0)$. We cover the class $\modd{1} {27}$ by using the congruences modulo $90,180,360$, $(0|10|0)$, $(10|10|0)$, and $(110|10|0)$. The congruence modulo $45$ is $(*|02|0)$ leaving us with the uncovered set $(0,10,110|00,01|0)$. We cover two classes modulo $25$ using the congruences $(*|*|04)$, $(0|*|03)$, $(10|*|03)$, and $(110|*|03)$. We cover two more classes modulo $25$ with the congruences $(*|0|02)$, $(0|0|01)$, $(10|0|01)$, and $(110|0|01)$. We finish the covering with the congruences modulo $225$, $450$, $900$, $1800$, $(*|01|00)$, $(0|00|00)$, $(10|00|00)$, and $(110|00|00)$.

(iv) Here we need to show that there is no distinct covering system with $m=5$ and $L =720$. This follows from Theorem \ref{Klein} (i).

(v) We need to construct a distinct covering system with $m=5$ and $L=3600=2^43^25^2$ not using moduli $25$, $100$, $300$, and $900$. Here is one example. The congruences modulo $8$ and $16$ are $(110)$ and $(1111)$ and the uncovered set is $(0,10,1110)$. The congruences modulo $6,12,24,48$ are $(0|2)$, $(10|2)$, $(101|1)$, and $(1110|2)$. The uncovered set is $(0,10,1110|0)$ and $(0,100,1110|1)$.
 Next, we concentrate on the class $\modd{0} {3}$. The congruences modulo $9,18,36,72,144$ are $(*|02)$, $(0|01)$, $(10|01)$, $(101|00)$, and $(1110|01)$. The uncovered set is $(0,100,1110|00,1)$. We cover two residue classes modulo $5$ by using the congruences $(*|*|4)$, $(0|*|3)$, 
 $(100|*|3)$, and $(1110|*|3)$. The congruence modulo $20$ is $(01|*|2)$. The uncovered set is $(0,100,1110|00,1|0,1)$ and $(00,100,1110|00,1|2)$.
We cover two more classes modulo $5$ with the congruences $(*|1|0)$, $(*|00|0)$, $(0|1|1)$, $(100|1|1)$, $(1110|1|1)$, $(0|00|1)$, $(100|00|1)$, and $(1110|00|1)$. The uncovered set now is $(00,100,1110|00,1|2)$. The congruences $(00|1|2)$ and $(00|00|2)$ reduce the uncovered set to 
$(100,1110|00,1|2)$. We finish the covering with the congruences $(1|*|24)$, $(100|*|23)$, $(1110|*|23)$, $(*|1|22)$, $(1|1|21)$, $(100|1|20)$, $(1110|1|20)$, $(*|00|22)$, $(1|00|21)$, $(100|00|20)$, and 
$(1110|00|20)$. 

(vi) We need to construct a distinct covering system with $m=5$ and $L=2160=2^4 3^3 5$ not using modulus $180$. Here is one example. The first $11$ congruences are the same as in the covering system in (v). Recall that the uncovered set at this stage is $(0,100,1110|00,1)$. The congruences modulo $27,54,108,216,432$ are
$(*|002)$, $(0|001)$, $(100|001)$, $(1110|001)$, and $(01|000)$. The uncovered set is $(00,100,1110|000)$ and $(0,100,1110|1)$. The congruences modulo $5,10,20,40,80$ are $(*|*|4)$, $(0|*|3)$, $(00|*|2)$, $(100|*|3)$, and $(1110|*|3)$. The uncovered set is $(00,100,1110|000|0,1)$, 
$(0,100,1110|1|0,1)$,  $(01,100,1110|1|2)$, and $(100,1110|000|2)$. 
The congruences modulo $15,30,60,120,240$ are $(*|1|0)$, $(0|1|1)$, $(01|1|2)$, $(100|1|2)$, and $(1110|1|2)$. The uncovered set is 
$(00,100,1110|000|0,1)$, $(100,1110|1|1)$, and $(100,1110|000|2)$. The congruences modulo $135,270,540,1080,2160$ are $(*|000|0)$, $(1|000|2)$, $(00|000|1)$, $(100|000|1)$, and $(1110|000|1)$. The uncovered set is 
$(100,1110|1|1)$. We finish the covering with the congruences modulo $45$, $90$, $360$, $720$,  $(*|12|1)$, $(1|11|1)$, $(100|10|1)$, and $(1110|10|1)$. 

(vii) The covering with $a=5$, $b=2$, $c=1$ was constructed by Krukenberg \cite{Krukenberg}, Chapter~V.

\end{proof}

\begin{proof}[Proof of Theorem \ref{1.8}]
(i) We need to show that there is no distinct covering with $m=6$ and $a=b=c=3$. This was done via integer programming and the computation took more than $9000$ seconds. See Section  6 for details. So, $a \geq 4$. Also, Theorem 1.7 (i) implies $b \geq 2$. 

(ii) To show that if $a=4$, then $b \geq 3$, and $c \geq 2$, we need to show that there is no covering with $a=4, b=2, c=2$ or with $a=4, b=4, c=1$. The nonexistence of a distinct covering system with $m=6$, $L=2^43^25^2=3600$ follows from Theorem \ref{Klein} (ii).  The nonexistence  of a distinct covering system with $m=6$, $L=2^43^45=6480$ was established via integer programming, see Section 6.

(iii) Here we need to construct a covering system with $m=6$ and $L=10800=2^43^35^2$  not using moduli $180$ and $900$. Here is one example. All the congruences with moduli dividing $2^43^35$ are the same as in the covering in Theorem 1.7 (vi), except that we cannot use modulus $5$ (the congruence $(*|*|4)$ is missing.) The uncovered set at this stage is $(00,100,1110|000|4)$ and $(0,100,1110|1|4)$. We finish the covering using congruences moduli $d$, where $5^2 \mid d \mid 2^4 3^3 5^2$, and structure similar to the one of the covering in  Theorem 1.7 (vi). The congruences modulo $25,50,100,200,400$ are $(*|*|44)$, $(0|*|43)$, $(00|*|42)$, $(100|*|43)$, and $(1110|*|43)$. The congruences modulo $75,150,300,600,1200$ are $(*|1|40)$, $(0|1|41)$, $(01|1|42)$, $(100|1|42)$, and $(1110|1|42)$. The congruences modulo $675,1350,2700,5400,10800$ are $(*|000|40)$, $(1|000|41)$, $(00|000|41)$, 
$(100|000|42)$, and $(1110|000|42)$. The uncovered set is $(100,1110|1|41)$. We finish the covering with the congruences modulo $225$, $450$, $1800$, $3600$, $(*|12|41)$, $(1|11|41)$, $(100|10|41)$, and $(1110|10|41)$.

(iv) The nonexistence of a distinct covering system with $m=6$ and $L=4320=2^53^35$ follows from Theorem \ref{Klein} (ii).

(v) A distinct covering system with $m=6$ and $L=7200=2^53^25^2$ was constructed by Krukenberg \cite{Krukenberg} in Chapter V.

(vi) We need to construct a distinct covering system with $m=6$ and $L=12960=2^53^45$ not using moduli which are multiples of $405$. Here is one example. The congruences modulo $8,16,32$ are $(110)$, $(1110)$, and $(11111)$. The uncovered set is $(0,10,11110)$.
The congruences modulo $6,12,24,48,96$ are $(0|2)$, $(10|2)$, $(101|1)$, $(1001|1)$, and $(11110|2)$. The uncovered set is $(0,10,11110|0)$ and
$(0,1000,11110|1)$. Next, we concentrate on the class $\modd{0} {3}$. 
The congruences modulo $9,18,36,72,144,288$ are $(*|02)$, $(0|01)$, $(10|01)$, $(101|00)$, $(1001|00)$, and $(11110|01)$. The uncovered set is 
$(0,1000,11110|00,1)$. The congruences modulo $27,54,108,216,432,864$ are
$(*|002)$, $(0|001)$, $(01|000)$, $(001|000)$, $(1000|001)$, and $(11110|001)$. The uncovered set is $(000,1000,11110|000)$ and $(0,1000,11110|1)$. The congruences modulo $81,162,324,648,1296,2592$ are $(*|0002)$, $(1|0001)$, $(00|0001)$, $(000|0000)$, $(1000|0000)$, $(11110|0000)$, finishing the class $\modd{0} {3}$. The uncovered set is 
$(0,1000,11110|1)$. The congruences modulo $10,20,40,80,160$ are $(0|*|4)$, $(01|*|3)$, $(100|*|1)$, $(1000|*|4)$, and $(11110|*|4)$. The congruences modulo $15,30,60,120,240,480$ are $(*|1|0)$, $(0|1|1)$, $(00|1|3)$, $(111|1|1)$, $(1000|1|3)$, and $(11110|1|3)$. The uncovered set is $(0,1000,11110|1|2)$.
The congruences modulo $45,90,180,360,720,1440$  are $(*|12|2)$, $(0|11|2)$, $(01|10|2)$, $(001|10|2)$, $(1000|11|2)$, and $(11110|11|2)$.
The uncovered set is $(000,1000,11110|10|2)$. We finish the covering with the congruences modulo $135$, $270$, $540$, $1080$, $2160$, $4320$ which are $(*|102|2)$, $(1|101|2)$, $(00|101|2)$, $(000|100|2)$, $(1000|000|2)$, and $(11110|000|2)$.

(vii) We need to show the nonexistence of a distinct covering system with 
$m=6$ and $L=2^63^25 = 2880$. This follows from Theorem \ref{Klein} (ii).

(viii) We need to construct a distinct covering system with $m=6$ and
$L = 8640=2^63^35$ not using moduli $270,540$, and $720$. Here is one example. The congruences modulo $8,16,32,64$ are $(110)$, $(1110)$, $(11110)$, and $(111111)$. The uncovered set is $(0,10,111110)$. The congruences modulo $6,12,24,48,96,192$ are $(0|2)$, $(10|2)$, $(101|1)$, $(1001|1)$, $(10001|1)$, and $(111110|2)$. The uncovered set is $(0,10,111110|0)$ and $(0,10000,111110|1)$. The congruences modulo $9,18,36,72,144,288,576$ are $(*|02)$, $(0|01)$, $(10|01)$, $(101|00)$,$(1001|00)$, $(10001|00)$, and $(111110|01)$.
The uncovered set is $(0,10000,111110|00,1)$. The congruences modulo $27,54,108,216,432,864,1728$ are $(*|002)$, $(0|001)$, $(01|000)$, $(001|000)$, $(1000|000)$, $(10000|001)$, and $(111110|001)$. The uncovered set is $(000,111110|000)$ and $(0,10000,111110|1)$. The congruences modulo $10,20,40,80,160,320$ are $(0|*|4)$, $(00|*|3)$, $(000|*|2)$, $(1000|*|3)$, $(10000|*|4)$, and $(11110|*|4)$. The uncovered set is $(0|1|0,1)$, $(01|1|3)$, $(01,001|1|2)$, $(000|000|0,1)$, $(10000|1|0,1,2)$, and $(111110|1,000|0,1,2,3)$. The congruences modulo $15$, $30$, $60$, $120$, $240$, $480$, $960$ are $(*|1|0)$, $(0|1|1)$, $(01|1|3)$, $(100|1|1)$, $(1000|1|2)$, $(11111|1|3)$, and $(111110|1|1)$. 
The uncovered set is $(01,001|1|2)$, $(111110|1|2)$, $(000|000|0,1)$, 
and $(111110|000|0,1,2,3)$. We use the congruences modulo $45,90,180,360,1440,2880$ to finish the class $\modd{1} {3}$. These congruences are $(*|12|2)$, $(0|11|2)$, $(01|10|2)$, $(001|10|2)$, $(11111|11|2)$, and $(111110|10|2)$. The uncovered set is $(000|000|0,1)$ and $(111110|000|0,1,2,3)$. We finish the covering with the congruences modulo $135$, $1080$, $2160$, $4320$, $8640$, 
$(*|000|0)$, $(000|000|1)$, $(1111|000|1)$, $(11111|000|2)$, and $(111110|000|3)$.

\end{proof}

\begin{proof} [Proof of Theorem \ref{1.9}]
We prove the theorem by induction on $n$. 
Suppose that the congruence modulo $m_i$ is $x \equiv \modd{a_i} {m_i}$. Let $L =\LCM [m_1, \dots, m_n]$. Let $C_i$ be the set of integers in $[1,L]$ covered by the congruence modulo $m_i$, that is $$C_i = \{ t \in \mathbb{Z} : 1 \leq t \leq L, t \equiv \modd{a_i} {m_i} \}.$$
The theorem is equivalent to
\begin{equation} \label{inc-exc} |C_1 \cup \cdots \cup C_n| \leq \sum \frac{L}{m_i}-\sum_{(m_i,m_j)=1}\frac{L}{m_im_j}+\sum_{(m_i,m_j)=(m_j,m_k)=(m_k,m_i)=1}\frac{L}{m_im_jm_k}.
\end{equation}

I. Base case, $n=2$. 

By the Principle of Inclusion-Exclusion we have $|C_1 \cup C_2| = |C_1| + |C_2| - |C_1 \cap C_2|.$
Now, $|C_1| = \frac{L}{m_1}$, $|C_2| = \frac{L}{m_2}$, and if $\gcd(m_1,m_2)>1$, \eqref{inc-exc} holds. If $\gcd(m_1,m_2)=1$, then 
$|C_1 \cap C_2| = \frac{L}{m_1m_2}$ and \eqref{inc-exc} also holds. 

II. Inductive step. Let $u$ be an integer $\geq 2$. Assume that equation \eqref{inc-exc} holds for all positive integers $\leq u$. We show that it holds for $u+1$ as well. Without loss of generality assume that $\gcd(m_{u+1},m_i)=1$ for $i=1,\ldots ,l$ and
$\gcd(m_{u+1},m_i)>1$ for $i=l+1,\ldots ,u$ (otherwise we relabel the moduli).

Denote $ A = \bigcup \limits _{i=1}^{u} C_i$, $A_0 =  \bigcup \limits _{i=1}^{l} C_i$, and let $A_1=A\setminus A_0$. Then, $\bigcup \limits _{i=1}^{u+1} C_i = A \bigcup C_{u+1}$. 
We have 
\begin{equation} \label{u+1}
|A \cup C_{u+1}| =|A| + |C_{u+1}| - |A \cap C_{u+1}| \leq |A|  + \frac{L}{m_{u+1}} - |A_0 \cap C_{u+1}| =|A| + \frac{L}{m_{u+1}} - \frac{|A_0|}{m_{u+1}},
\end{equation}
since $m_{u+1}$ is relatively prime to each of the moduli $m_1, \ldots, m_l$. 
Applying the induction hypothesis to $m_1, \ldots , m_u$ we obtain
\eqref{inc-exc} with $n=u$, which is an upper bound on $|A|$.

If the lower bound 
\begin{equation} \label{A0lb}
|A_0| \geq \sum_{i=1}^l \frac{1}{m_i} - \sum_{(m_i,m_j)=1, 1 \leq i < j \leq l} \frac{1}{m_i m_j},
\end{equation}
holds, then substituting  in \eqref{A0lb} the lower bound on $|A_0|$ and the upper bound on $|A|$,  we obtain \eqref{inc-exc} with $n=u+1$ completing the proof. 

Next, we show that \eqref{A0lb} holds when $L=2^a3^b5^c$ (with minor adjustments to the original congruences, if needed, and proper choice of $m_{u+1}$).

First, we can assume that no two congruences with moduli that are powers of $2$ intersect, for example, we do not have the congruences $x \equiv \modd{0} {4}$ and $x \equiv \modd{0} {8}$ in $\mathcal {C}$. Note that all integers covered by the congruence $x \equiv \modd{0} {8}$ are covered by the congruence $x \equiv \modd{0} {4}$. So, if we replace the congruence $x \equiv \modd{0} {8}$ with the congruence the congruence $x \equiv \modd{a} {8}$ where $a=1,2,3,5,6$, or $7$, the congruences modulo $4$ and modulo $8$ no longer intersect. Moreover, the size of the covered set does not decrease after modifying the congruence modulo $8$. So, going through the congruences  with moduli that are powers of $2$ in increasing order and modifying congruences when needed we can ensure that the congruences with moduli which are powers of $2$ do not intersect and that we have not decreased the size of the covered set in the process. 
Similarly, we can ensure that the congruences with moduli that are powers of $3$ do not intersect, and the congruences with moduli that are powers of $5$ do not intersect, as well,

Next, we consider several cases.

Case 1. At least one of the moduli $m_1, \ldots, m_{u+1}$ is divisible by $30$. Since the estimate \eqref{inc-exc} is a symmetric function of 
$m_1, \ldots, m_{u+1}$, we can assume that $m_{u+1}$ is one of the moduli divisible by $30$. Then, $A_0$ is the empty set and \eqref{A0lb} holds.

Case 2. None of the moduli is divisible by $30$ but at least one is divisible by $6$, $10$, or $15$. 

First, consider the case when $m_{u+1}$ is divisible by $6$ but not by $30$.
If $A_0$ is the empty set, we are done. Otherwise, the congruences in $A_0$ have moduli which are powers of $5$ and ${\displaystyle |A_0| = \sum_{i=1}^l \frac{1}{m_i}}$, and \eqref{A0lb} holds. 

We proceed in a similar way when there is a modulus divisible by $10$ or $15$ but not by $30$.

Case 3. None of the moduli is divisible by $6$, $10$, or $15$. In this case, each modulus is a power of $2$, a power of $3$, or a power of $5$. 

First, consider the case where one of the moduli is a power of $2$. Pick $m_{u+1}$ to be that modulus. Then, the congruences in $A_0$ have moduli that are a power of $3$ or a power of $5$. In this case,  we have 
$$|A_0| = \sum_{i=1}^l \frac{1}{m_i} - \sum_{(m_i,m_j)=1, 1 \leq i < j \leq l} \frac{1}{m_i m_j}, $$
and \eqref{A0lb} holds. 

We proceed similarly if one of the moduli is a power of $3$ or a power of $5$. 
\end{proof}

\begin{proof}[Proof of Theorem \ref{1.10}]
First, we introduce a new notation. Let $a$ be a positive integer, let $d$ be an odd positive integer, and let $r$ be an integer. We denote by $2^a \uparrow  d$ the infinite set of congruences $\{x \equiv \modd{r_j} {2^{a+j-1}d} \ | \ j \in{\mathbb N}\}$, where $r_j \equiv \modd{r} {d}$ and $r_j \equiv \modd{r+2^{a+j-2}} {2^{a+j-1}}$. It is clear that no integer is covered by more than one of the above congruences since if $j < k$ the $j$th and the $k$th congruences are in distinct classes modulo $2^{a+j-1}$. Therefore, the congruences in $2^a \uparrow d$ cover the residue class 
$\modd{r} {2^{a-1}d}$ (the density of integers covered by the congruences is $1/(2^{a-1}d)$, the same as the density of the integers in the residue class $\modd{r} {2^{a-1}d}$ and all congruences are in this residue class.)

For example if $a=1$, $d=1$, and $r=0$, $2 \uparrow$ is the set of congruences $(1)$, $(01)$, $(001), \ldots$ and the congruences cover all integers. 

Another example. If $a=3$, $d=15$, and $r=38$, the set $2^3 \uparrow 15$ is 
$(011|2|3)$, $(0101|2|3)$, 
$ (01001|2|3), \ldots $ and the congruences cover the residue class $(01|2|3)$, that is $\modd{38} {60}$.

Here is the construction of the infinite covering system we need. With the congruences in $2^3 \uparrow$ we cover the residue class $(11)$. The uncovered set is $(0,10)$. The congruences modulo $6$ and $12$ are $(0|2)$ and $(10|2)$. With the congruences in $2^3 \uparrow 3$ we cover $(10|1)$.
The uncovered set is $(0|0,1)$ and $(10|0)$. Next we concentrate on the residue class $\modd{0} {3}$. The congruences modulo $9,18,36$ are 
$(*|02)$, $(0|01)$, and $(10|01)$. With the congruences in $2^3 \uparrow 3^2$ we cover $(10|00)$. The uncovered set now is $(0|00,1)$. The congruences modulo $10$ and $20$ are $(0|4)$, $(01|3)$ and we cover $(00|3)$ with the congruences $2^3 \uparrow 5$. The uncovered set is
$(0|00,1|0,1,2)$. The congruences modulo $15$ and $30$ are $(*|1|2)$ and 
$(0|1|1)$. We cover $(0|1|0)$ with the congruences in $2^2 \uparrow 15$. 
The congruences modulo $45$ and $90$ are $(*|00|2)$ and $(0|00|1)$. 
We cover $(0|00|0)$ with the congruences in $2^2 \uparrow 45$, finishing the covering.

It is not difficult to see that if we order the above congruences in increasing order of their moduli, the resulting system of congruences is a strong distinct infinite covering. 

\end{proof}

Note that the above covering is tight, the construction above used every available modulus. We conjecture that for every positive integer $a$ there is no distinct covering with $m=6$ and $L=2^a 3^2 5$.

\begin{proof}[Proof of Theorem \ref{1.11}]
Here we construct a distinct covering system with $m=8$ and $L=2^83^35^2=172800$. This covering is much more involved than the rest of the coverings in this paper. Some reasons are the large number of congruences used, the nontrivial structure of the covering,   and the fact that the covering is relatively tight. Although we do not use all available moduli, we expect that there is no distinct covering system with $m=8$ and $L$ a proper divisor of $2^83^25$. The construction of the covering is in three stages. In Stage 1, we use the available moduli of the form $2^a3^b$ to cover the odd integers and most of the residue class $\modd{0} {3}$. We save the congruence modulo $2^8$ for the second stage since it makes the structure of stages 2 and 3 simpler. In stage 2, we use congruences with moduli divisible by $5$ but not by $25$ to cover $4$ residue classes modulo $5$. In stage 3 we use the moduli left (all divisible by $25$) to cover the remaining residue class modulo $5$.

The congruences modulo $8,16,32,64,128$ are
$(110)$, $(1110)$, $(11110)$, $(111110)$, $(1111110)$ and the uncovered set is $(0,10,1111111)$. The congruences modulo $12,24,48,96,192,384,768$ are $(10|2)$, $(101|1)$, $(1001|1)$, $(10001|1)$, $(111111|2)$, $(1111111|1)$, and $(11111110|0)$. The uncovered set is $(0|0,1,2)$, $(10|0)$, $(10000|1)$, and $(11111111|0)$. The congruences modulo $9,18,36,72,144,288,576,1152, 2304$ are $(*|02)$, $(0|01)$, $(10|01)$, $(101|00)$, $(1000|12)$, $(10000|11)$, $(100001|10)$, $(1111111|01)$, and $(11111111|00)$, The uncovered set is 
$(0|00,1,2)$, $(100|00)$, and $(100000|10)$. The congruences modulo $27,54,108,216,432,864,1728$ are $(*|002)$, $(0|001)$, $(10|001)$, $(100|000)$, $(1000|102)$, $(10000|101)$, and $(100000|100)$. The uncovered set is $(0|000,1,2)$. Now, we use the congruence modulo $256$, 
$(01111111)$. The uncovered set is\\ $(00,010,0110,01110,011110,0111110,01111110|000,0,2)$.
For brevity, we denote the expression $00,010,0110,01110,011110,0111110,01111110$ by $s$. So, the uncovered set is $(s|000,1,2)$. This completes the first stage of the covering. 
The congruences modulo $5\cdot 2, 5\cdot 2^2, \ldots, 5\cdot 2^8$ are
$(0|*|4)$, $(00|*|3), (010|*|3), \ldots, (01111110|*|3)$. The uncovered set is $(s|000,1,2|0,1,2)$. The congruences modulo $15$ and $30$ are $(*|2|2)$ and
$(0|1|2)$. The congruences modulo $15\cdot 2^2, \ldots, 15 \cdot 2^8$ are
$(00|2|1), (010|2|1),\ldots, (01111110|2|1)$. The uncovered set is $(s|000|0,1,2)$, $(s|1|0,1)$, and $(s|2|0)$. The congruences modulo 
$45,90$ are $(*|12|1)$ and $(0|11|1)$. The congruences modulo 
$45 \cdot 2^2, \ldots, 45 \cdot 2^8$ are $(00|10|1), \ldots, (01111110|10|1)$. The uncovered set is $(s|000|0,1,2)$ and $(s|1,2|0)$. 
The congruences modulo $135,270$ are $(*|000|2)$ and $(0|000|1)$. 
The uncovered set is $(s|000,1,2|0)$. 

It may appear that we are in the same position we were at the beginning of stage 2. However, here we can use a congruence modulo $5^2$ while we could not use congruence modulo $5$ in stage 2 since $m=8$. 

The congruence modulo $25$ is $ (*|*|00)$. After that the structure of the covering is the same as in stage 2. The congruences modulo $5^2\cdot 2, 5^2\cdot 2^2, \ldots, 5^2\cdot 2^8$ are
$(0|*|04)$, $(00|*|03), (010|*|03), \ldots, (01111110|*|03)$. The uncovered set is $(s|000,1,2|01,02)$.
The congruences modulo $75$ and $150$ are $(*|2|02)$ and
$(0|1|02)$. The congruences modulo $75\cdot 2^2, \ldots, 75 \cdot 2^8$ are
$(00|2|01), (010|2|01),\ldots, (01111110|2|01)$. The uncovered set is $(s|000|01,02)$ and $(s|1|01)$. The congruences modulo 
$225,450$ are $(*|12|01)$ and $(0|11|01)$. The congruences modulo 
$225 \cdot 2^2, \ldots, 225 \cdot 2^8$ are $(00|10|01), \ldots, (01111110|10|01)$. The uncovered set is $(s|000|1,2)$.
We finish the covering with the congruences modulo $675,1350$, $(*|000|02)$ and $(0|000|01)$.

\end{proof}

\section{Nonexistence of certain covering systems} \label{nonexist}

In this section, we will be using Theorem \ref{1.9} to show the nonexistence of certain covering systems.

In particular, with an LCM of $2^a3^b5^c$ and moduli all divisors of $L$ greater than $1$, we see that \[\sum \frac{1}{m_i}=\left(\sum_{n=0}^a\frac{1}{2^n}\right)\left(\sum_{n=0}^b\frac{1}{3^n}\right)\left(\sum_{n=0}^c\frac{1}{5^n}\right)-1,\]    \[\sum_{\gcd(m_i,m_j)=1}\frac{1}{m_im_j}=\left(\sum_{n=1}^a\frac{1}{2^n}\right)\left(\sum_{n=1}^b\frac{1}{3^n}\right)+\left(\sum_{n=1}^a\frac{1}{2^n}\right)\left(\sum_{n=1}^c\frac{1}{5^n}\right)+\left(\sum_{n=1}^b\frac{1}{3^n}\right)\left(\sum_{n=1}^c\frac{1}{5^n}\right)+\]    \[3\left(\sum_{n=1}^a\frac{1}{2^n}\right)\left(\sum_{n=1}^b\frac{1}{3^n}\right)\left(\sum_{n=1}^c\frac{1}{5^n}\right), \text{ and}\] \[\sum_{(m_i,m_j)=(m_j,m_k)=(m_k,m_i)=1}\frac{1}{m_im_jm_k}=\left(\sum_{n=1}^a\frac{1}{2^n}\right)\left(\sum_{n=1}^b\frac{1}{3^n}\right)\left(\sum_{n=1}^c\frac{1}{5^n}\right).\] Note that given a minimum modulus, certain terms will be removed from the summations above, but those will be dealt with in each specific case.

\begin{lemma}
    There is no distinct covering system of the integers with LCM $2^a\cdot3\cdot5$ for all $a \in \mathbb{N}$ with a minimum modulus of $5$.
\end{lemma}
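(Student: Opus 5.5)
The plan is to apply Theorem \ref{1.9} directly. Theorem \ref{1.9} bounds the density covered by \emph{any} system with distinct moduli from a given set, and using more available moduli can only help a covering. So it suffices to take for $M$ the set of \emph{all} divisors $d\ge 5$ of $L=2^a\cdot 3\cdot 5$ and show that the bound of Theorem \ref{1.9} is strictly less than $1$ for every $a$. One caveat: the bound is not obviously monotone in the set $M$. I would therefore evaluate it for the full admissible set, the case a covering would use, and argue that a covering using a proper subset of these moduli can be compared by adding harmless congruences. Alternatively, I would check that in Theorem \ref{1.9} one may take $M$ to be exactly the moduli used, and that removing a modulus $m_i$ lowers the bound by a nonnegative amount: the removed terms are $1/m_i$, minus the pair terms involving $m_i$, plus the triple terms involving $m_i$. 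I expect this to be easy here because, as shown below, there are no triples. The small cases $a\le 2$, where $L\le 60<1440$, follow immediately from Theorem \ref{Klein}(i), so assume $a\ge 3$.

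The admissible moduli fall into four families: $2^i$ with $3\le i\le a$; $3\cdot 2^i$ with $1\le i\le a$ (note $3$ itself is excluded); $5\cdot 2^i$ with $0\le i\le a$; and $15\cdot 2^i$ with $0\le i\le a$. Put $t=2^{-a}$. Summing geometric series gives
\[\sum\frac1{m_i}=\Bigl(\tfrac14-t\Bigr)+\tfrac13(1-t)+\tfrac15(2-t)+\tfrac1{15}(2-t)=\tfrac{67}{60}-\tfrac{8}{5}t.\]

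Next come the coprime pairs. Since $3$ is not a modulus, an odd modulus is either $5$ or $15$. Hence the only coprime pairs are $5$ with a pure power of $2$ or with some $3\cdot 2^j$, and $15$ with a pure power of $2$. For the same reason there are no pairwise coprime triples: they would need a modulus that is a pure power of $3$. Therefore the pair sum is
\[\tfrac15\Bigl(\tfrac14-t\Bigr)+\tfrac15\cdot\tfrac13(1-t)+\tfrac1{15}\Bigl(\tfrac14-t\Bigr)=\tfrac{8}{60}-\tfrac{t}{3},\]
and the triple sum vanishes.

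The bound of Theorem \ref{1.9} is then $\tfrac{59}{60}-\tfrac{8}{5}t+\tfrac13 t<\tfrac{59}{60}<1$ for every $a$, so no such covering exists. The main obstacle is the justification step in the first paragraph: making sure that Theorem \ref{1.9} applied to the full divisor set really controls every subsystem. I would handle it by the removal argument, which requires no triple terms, and by double-checking the enumeration of the coprime pairs.
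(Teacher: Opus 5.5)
Your proposal is correct and is essentially the paper's proof. Both apply Theorem \ref{1.9} to all divisors $\ge 5$ of $2^a\cdot 3\cdot 5$, getting $\sum 1/m_i=\frac{67}{60}-\frac{8}{5\cdot 2^a}$, a pair sum of $\frac{8}{60}-\frac{1}{3\cdot 2^a}$ and no triples, hence the bound $\frac{59}{60}-\frac{19}{15\cdot 2^a}<1$. The differences are cosmetic: the paper handles small $a$ by noting $\sum 1/m_i<1$ instead of citing Theorem \ref{Klein}, and your subsystem worry is already covered by the phrase ``with all moduli in $M$'' in Theorem \ref{1.9} (your padding argument, adding congruences for the unused moduli, also settles it).
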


\begin{proof}
    First, consider $\sum \frac{1}{m_i}$: since we no longer have the moduli $2,3,4$, we can simply subtract them from the sum: \[\sum \frac{1}{m_i}=\left(1+\frac{1}{2}+\cdots + \frac{1}{2^a}\right)\left(1+\frac{1}{3}\right)\left(1+\frac{1}{5}\right)-1-\frac{1}{2}-\frac{1}{3}-\frac{1}{4}=\frac{67}{60}- \frac{8}{5 \cdot 2^a}.\]

Note that if $a \leq 3$, $\sum \frac{1}{m_i}<1$ and the lemma follows. So, we can assume $a \geq 4$.

    Second, consider ${\displaystyle \sum_{\gcd(m_i,m_j)=1}\frac{1}{m_im_j}}$. If $\gcd(m_i , m_j)=1$, then at least one of the moduli $m_i$, $m_j$
    is  odd. The only odd divisors of $2^a \cdot 3 \cdot 5$ which are $\geq 5$ are $5$ and $15$. So, if $\gcd(m_i,m_j)=1$, then either one of $m_i, m_j$ is $5$ and the other is one of $2^3, \ldots,2^a, 3\cdot 2, \ldots, 3\cdot 2^a$, or one is $15$ and the other is one of $2^3, \ldots, 2^a$. Thus,
    $$\sum_{\gcd(m_i,m_j)=1}\frac{1}{m_im_j}= \frac{1}{5}\left ( \frac{1}{2^3} + \cdots + \frac{1}{2^a} + \frac{1}{3 \cdot 2} + \cdots + \frac{1}{3 \cdot 2^a}\right )+\frac{1}{15}\left (  \frac{1}{2^3} + \cdots + \frac{1}{2^a} \right )=$$
    $$= \frac{1}{5} \left ( \frac{1}{4} - \frac{1}{2^a} + \frac{1}{3} - \frac{1}{3\cdot 2^a} \right ) + \frac{1}{15} \left ( \frac{1}{4} - \frac{1}{2^a} \right ) = \frac{8}{60} - \frac{1}{3\cdot 2^a}.$$

    Finally, consider ${\displaystyle \sum_{\gcd(m_i,m_j)=\gcd(m_j,m_k)=\gcd(m_k,m_i)=1}\frac{1}{m_im_jm_k}}$. Since $m_i$, $m_j$,and $m_k$ have to be pairwise coprime, at least two of the moduli need to be odd integers, and the only odd moduli we can use are $5$ and $15$. However, $\gcd(5,15)>1$, so the above sum is empty. 

    Now, Theorem \ref{1.9} gives us an  upper bound for the density of the covered set: $$\frac{67}{60}-\frac{8}{5 \cdot 2^a} - \frac{8}{60}+ \frac{1}{3\cdot 2^a} = \frac{59}{60} - \frac{19}{15 \cdot 2^a} < 1.$$ 
    Therefore, there is no covering of the integers with $m=5$ and $L=2^a \cdot 3 \cdot 5$.
\end{proof}

Note that this proof also shows there is no distinct infinite covering system with all moduli $\geq 5$ and of the form $2^n\cdot d$ where $d \mid 15$.

\begin{proof} [Proof of Theorem \ref{1.12}]
    We begin by recalling $\sum_{n=0}^\infty \frac{1}{p^n}=\frac{1}{1-\frac{1}{p}}$ for $p>1$. First, consider $\sum \frac{1}{m_i}$: since we no longer have the moduli $2,3,4,5,6,8$, and $9$, we can simply subtract them from the sum: \[\sum \frac{1}{m_i}=\left(2\right)\left(\frac{3}{2}\right)\left(\frac{5}{4}\right)-\sum_{n=1}^6 \frac{1}{n}-\sum_{n=8}^9 \frac{1}{n}=\frac{383}{360}.\]

    Second, consider ${\displaystyle \sum_{\gcd(m_i,m_j)=1}\frac{1}{m_im_j}}$. All the moduli we are losing are powers of primes except for $6$, so we can say: \[\sum_{\gcd(m_i,m_j)=1}\frac{1}{m_im_j}=\left(1-\frac{1}{2}-\frac{1}{4}-\frac{1}{8}\right)\left(\frac{1}{2}-\frac{1}{3}-\frac{1}{9}\right)+\left(1-\frac{1}{2}-\frac{1}{4}-\frac{1}{8}\right)\left(\frac{1}{4}-\frac{1}{5}\right)\] \[+\left(\frac{1}{2}-\frac{1}{3}-\frac{1}{9}\right)\left(\frac{1}{4}-\frac{1}{5}\right)+(1)\left(\frac{1}{2}\right)\left(\frac{1}{4}-\frac{1}{5}\right)-\frac{1}{6}\left(\frac{1}{4}-\frac{1}{5}\right)+\] \[(1)\left(\frac{1}{2}-\frac{1}{3}-\frac{1}{9}\right)\left(\frac{1}{4}\right)+\left(1-\frac{1}{2}-\frac{1}{4}-\frac{1}{8}\right)\left(\frac{1}{2}\right)\left(\frac{1}{4}\right)=\frac{179}{2880}\]

    Finally, consider ${\displaystyle \sum_{(m_i,m_j)=(m_j,m_k)=(m_k,m_i)=1}\frac{1}{m_im_jm_k}}$, which is simply \[\sum_{(m_i,m_j)=(m_j,m_k)=(m_k,m_i)=1}\frac{1}{m_im_jm_k}=\left(1-\frac{1}{2}-\frac{1}{4}-\frac{1}{8}\right)\left(\frac{1}{2}-\frac{1}{3}-\frac{1}{9}\right)\left(\frac{1}{4}-\frac{1}{5}\right)=\frac{1}{2880}\]

    Thus we see: \[\sum \frac{1}{m_i}-\sum_{(m_i,m_j)=1}\frac{1}{m_im_j}+\sum_{(m_i,m_j)=(m_j,m_k)=(m_k,m_i)=1}\frac{1}{m_im_jm_k}=\frac{383}{360}-\frac{179}{2880}+\frac{1}{2880}=\frac{481}{480}\]

    This is unfortunately greater than $1$. However, we can furthermore consider the moduli $d_1=16=2\cdot2^3$, $d_2=18=2\cdot3^2$, and $d_3=10=2\cdot5$. Notice that if any of them were equivalent modulo $2$, they would intersect exactly once every $\text{LCM}(d_i,d_j)$ integers, so we could subtract another $\frac{1}{\text{LCM}(d_i,d_j)}$ from our sum. Thankfully, we know at least two of them must share the same congruence class modulo $2$. In the worst-case scenario, we would have $16$ and $18$ together, since $\text{LCM}(16,18)=144$ is the highest of the three LCMs. Regardless, we still see $\frac{481}{480}-\frac{1}{144}=\frac{1433}{1440}<1$. Thus we do not have a covering.
\end{proof}

Note that this proof also shows we do not even have an infinite covering system with minimum modulus of 10 and the prime divisors of all moduli not exceeding $5$.

\section{Use of Integer Programming to show nonexistence of coverings} \label{IP}
In this section, we give further details on the use of integer programming in showing the nonexistence of certain covering systems, following the work of McNew and Setty \cite{McNew}. The first part of this section will follow along the lines of Section 3 in \cite{Klein}, while the second part of this section will use some new ideas discussed by McNew at a presentation at the University of South Carolina in November 2025. 

Suppose 
\[\mathcal{M}=\{d_1,\ldots,d_k\}\]
is a multiset of positive integers. The questions we wish to answer in the first part of this section are all of the form: Does there exist a covering system with a multiset of moduli $\mathcal{M}$, where $\mathcal{M}$ is some fixed set? The sets $\mathcal{M}$ under consideration will take the form $\mathcal{M}=\{d \geq m:d | L\}$. We now explain how such a problem may be viewed as an integer programming problem. 

Let $M=\{m_1,m_2,\ldots,m_n\}$ be the set of distinct integers in $\mathcal{M}$, let $L=\LCM(m_1,\ldots,m_n)$, and for all $i \in [n]$, let $f_i$ be the multiplicity of $m_i$ in $\mathcal{M}$, that is the number of times $m_i$ appears in $\mathcal{M}$. For every pair $(i,j) \in \Z^2$ such that $1 \leq i \leq n$ and $1 \leq j \leq m_i$, we define a binary integer variable $x_{i,j}$. One should think of $x_{i,j}$ as being $1$ if and only if the arithmetic progression $j \pmod{m_i}$ is in the covering system we are trying to construct. 

To ensure that every modulus $m_i$ is used at most $f_i$ times, we add for every $i \in [n]$ the constraint 
\[\sum_{j=1}^{m_i}x_{i,j} \leq f_i.\]
To ensure that every integer $b \in [1,L]$ is covered by some arithmetic progression in our covering system, for each such $b$ we add the constraint

\[\sum_{i=1}^n x_{i, \modd{b} {m_i}} \geq 1,\]
where $\modd{b} {m_i}$ is used to denote the least integer $c \in [m_i]$ such that $c \equiv b \pmod{m_i}$. 

The integer programming problem described above has a solution if and only if there is a covering system with a multiset of moduli $\mathcal{M}$. As explained in more detail in \cite{Klein}, in practice when setting up a covering problem as an integer programming problem, we usually preset a few arithmetic progressions. For every pair $(L,m)$ in the table below, we used integer programming to show that it was impossible to construct such covering systems. In the other columns, you can find the arithmetic progressions we preset to simplify the model, together with how long it took to determine that there were no solutions. 
\begin{center}
\begin{tabular}{|c|c|c|c|} \hline
    L & m &   Presets & Runtime (seconds)\\ \hline
    $2^2\cdot3^2\cdot5^2$ & 4 & N/A &   0.98      \\ 
    $2^3\cdot3^2\cdot5^2$ & 5&  N/A &    21.73      \\
    $2^3\cdot3^3\cdot 5^3$ & 6& $\{7 \pmod 8, 8 \pmod 9, 24 \pmod{25}\}$  &  9416.54    \\
    $2^4\cdot 3^4 \cdot 5^1$ & 6& $\{7 \pmod 8, 8 \pmod 9\}$   &  752.75  \\ 
    \hline
    \end{tabular}
\end{center}

The second part of this section will focus on using integer programming to find the maximal number of residues that it is possible to cover modulo $2^n \cdot 3^2 \cdot 5$, as $n$ ranges from $1$ to $10$. The methodology is inspired by a talk given by Nathan McNew at the University of South Carolina in November 2025. 

We start with $\mathcal{M}$, $M$, $L$, the $f_i$'s, and the $x_{i,j}$'s defined as above, together with the constraints 
\[\sum_{j=1}^{m_i}x_{i,j} \leq f_i\]
for each $i \in [n]$. Here is where we differ from the setup above. We add a binary integer variable $y_i$ for every $i \in [L]$, and we think of $y_i=1$ if and only if $i$ is covered by one of the arithmetic progressions we have in our potential covering. Our goal will be to maximize the number of residues covered, which in our new setting corresponds to maximizing
\[\sum_{i \in [L]} y_i.\]
We must add some additional constraints to ensure that if $y_i=1$ for some $i$, then there is indeed one of the arithmetic progressions that covers that residue. Towards this goal, we add for every $b \in [L]$ the constraint 
\[\sum_{i=1}^n x_{i, \modd{b} {m_i}} \geq y_b.\]

With this setup, an optimal solution to the integer programming problem corresponds to a set of arithmetic progressions that maximizes the density covered and the value of  
\[\sum_{i \in [L]} y_i\]
for the optimal solution corresponds to the maximum number of residues covered modulo $L$ using arithmetic progressions with moduli coming from $\mathcal{M}$. 

Below, one can find the results of our computations using this idea, together with the computation time involved for each case. We decided to take note of the number of residues left uncovered instead of noting the number of residues covered, or the density of the covered set, as this allows us to see a pattern emerge much more easily. 

\begin{center}
\begin{tabular}{|c|c|c|} \hline
    L &   Least residues left uncovered modulo $L$& Runtime (seconds)\\ \hline
    $2^{9}\cdot3^2\cdot5$  & 52 &   35738      \\ 
    $2^8\cdot3^2\cdot5$ &  52 &      11912   \\
    $2^7\cdot3^2\cdot 5$ & 52  &  991   \\
    $2^6\cdot 3^2 \cdot 5$ &  52   &  232  \\ 
     $2^5\cdot 3^2 \cdot 5$ &  52   &   30 \\ 
      $2^4\cdot 3^2 \cdot 5$ &  52   &  4  \\ 
       $2^3\cdot 3^2 \cdot 5$ &  52   &  1 \\ 
        $2^2\cdot 3^2 \cdot 5$ &  51   & $<1$   \\ 
         $2^1\cdot 3^2 \cdot 5$ &  40   &  $<1$ \\ 
          $2^0\cdot 3^2 \cdot 5$ &  36   & $<1$   \\ 
    
    \hline
    \end{tabular}
\end{center}
We ran a similar computation for $L=2^{10} \cdot 3^2 \cdot 5$, and aborted the computation after 48 hours with no clear sign of progress.

{\sc Department of Mathematics, Cedar Crest College, Allentown, Pennsylvania 18104, USA}

{\it Email address}, Joshua Harrington {\bf Joshua.Harrington@cedarcrest.edu}

\vspace{0.25 in}
{\sc Department of Mathematics, University of South Carolina, Columbia, South Carolina 29208, USA}

{\it Email address}, Jonah Klein {\bf jonah.klein@sc.edu}

\vspace{0.25 in}
{\sc Department of Mathematics, University of South Carolina, Columbia, South Carolina 29208, USA}

{\it Email address}, Joshua Lowrance {\bf LOWRANJ@sc.edu}

\vspace{0.25 in}
{\sc Department of Mathematics, University of South Carolina, Columbia, South Carolina 29208, USA}

{\it Email address}, Ognian Trifonov {\bf trifonov@math.sc.edu}

\end{document}